\let\@wraptoccontribs\wraptoccontribs
\theoremstyle{theorem}
\newtheorem{Thm}{Theorem}
\newtheorem{Cor}[Thm]{Corollary}
\newtheorem{Lem}[Thm]{Lemma}
\newtheorem{Prop}[Thm]{Proposition}
\theoremstyle{Definition}
\theoremstyle{remark}
\newtheorem{Rem}{Remark}
\def\square{\hbox{\vrule\vbox{\hrule\phantom{o}\hrule}\vrule}}
\newcommand{\cal}{\mathcal}
\newcommand{\imbed}{\hookrightarrow}
\renewcommand{\L}{{\cal L}}
\newcommand{\F}{{\cal F}}
\newcommand{\G}{{\cal G}}
\newcommand{\B}{{\cal B}}
\newcommand{\E}{{\cal E}}
\newcommand{\bbP}{{\mathbb P}}
\newcommand{\Aone}{{\mathbb A}^1}
\newcommand{\Zet}{{\mathbb Z}}
\providecommand\pcmd\providecommand
\pcmd{\gr}{\operatorname{gr}}
\pcmd{\Bun}{{\mathrm{Bun}}}
\pcmd{\cD}{{\mathcal D}}
\renewcommand{\MM}{{\mathcal M}}
\newcommand{\Hi}{{\mathrm{Hitch}}}
\newcommand{\DBu}{{{\mathcal D}_{\Bun}}}
\newcommand{\DBud}[1][d]{{{\mathcal D}_{\Bun^{#1}}}}
\newcommand{\DBudC}[1][d]{{{\mathcal D}_{\Bun^{#1}\x C}}}
\newcommand{\DBus}{(\DBu)_s}
\newcommand{\DBuu}{{{\mathcal D}_{\uBun}}}
\newcommand{\DBuud}{{{\mathcal D}_{\uBun^d}}}
\newcommand{\DBuus}{(\DBuu)_s}
\newcommand{\DBuuds}{(\DBuud)_s}
\newcommand{\DC}{{{\mathcal D}_{C}}}
\newcommand{\DBuC}{{{\mathcal D}_{\Bun\x C}}}
\newcommand{\Op}{{\rm Op}}
\pcmd{\Loc}{{\rm Loc}}
\newcommand{\Dmod}{\mathcal D\hmod}
\newcommand{\DBumod}{\Dmod_{\Bun}}
\newcommand{\DBuCmod}{\Dmod_{\Bun\x C}}
\newcommand{\DBudCmod}{\Dmod_{\Bun^d\x C}}
\pcmd{\hmod}{\textrm{-mod}}
\newcommand{\uBun}{\underline{\Bun}}
\newcommand{\LG}{^LG}
\renewcommand{\H}{{\mathcal H}}
\newcommand{\chark}{{\operatorname{char} k}}
\pcmd\red[1]{{\color{red}#1}}
\ncmd\Funiv{\F_{\mathrm{univ}}}
\ncmd\dinZ{\ensuremath{d\in\Zet}}
\ncmd\xxx{\todo[XXX]}
\ncmd\HomC{\cHom_{/C}}
\ncmd\shtsr{\mathbin{\mathop{\tsr}\limits^!}}
\ncmd\DDBuC{D_{\Bun\x C}}
\ncmd\TCn{\cT_C^{\tsr n/2}}
\ncmd\RHomC{R{\HomC}}
\providecommand{\quash}[1]{}  %%Anything in \quash is ignored
\ncmd\fm{\mathfrak m}
\ncmd\bbF{{\mathbb F}}
\title[Quantization of Hitchin integrable system]
{Quantization of Hitchin integrable system via positive  characteristic}
\dedicatory{To David Kazhdan with admiration}
\author{Roman Bezrukavnikov}
\author{Roman Travkin}
\begin{document}

\begin{abstract}
The main result of the seminal (unpublished) work of Beilinson-Drinfeld is the construction of an automorphic
sheaf corresponding to a local system which carries the additional structure of an oper. This is achieved by quantizing the Hitchin intergrable system.
In this note we show (in the case of $G=GL(n)$) that this result admits a short proof based on
positive characteristic methods.

In the appendix we study the restriction of the $p$-curvature ($p$-Hitchin) map  to the space of opers, we show it is finite and flat by checking
it is asymptotic to Frobenius at infinity. We speculate on the relation of this result to a conjecture of Frenkel, Etingof and Kazhdan on
the spectrum of global critically twisted differential operators on $\Bun_G$ acting on the space of $L^2$ sections of the bundle of half forms. 
\end{abstract}

\maketitle

%\tableofcontents

%\centerline{\em To David Kazhdan with admiration.}

\section{Introduction}
Geometric Langlands duality predicts the existence of an automorphic $\D$-module
$\MM_L$ on $\Bun_G$ attached to a (de~Rham) $\LG$-local system $L$. Here $G$, $\LG$
are reductive groups dual in the sense of Langlands, $\Bun_G$ is the moduli stack of $G$-bundles
on a complete smooth
irreducible curve $C$ and the local system $L$ on $C$ with structure group $\LG$
is assumed to be irreducible (i.e.,
 %its image does not lie in
 it does not admit a reduction to
 a proper parabolic subgroup).

In their celebrated unpublished work \cite{BD} Beilinson and Drinfeld explain that geometric Langlands duality can be thought of as a quantization of a natural duality for the Hitchin integrable systems
associated to two Langlands dual groups $G$, $\LG$. Furthermore, they present a construction of $\MM_L$ for a local system $L$ which carries
an additional structure of an {\em oper}, see \cite{BDop} for an introduction to this notion.
Their construction uses local to global arguments, it relies heavily on representation theory of affine
Lie algebras at the critical level.

In this note we describe, for $G=GL_n$, a much shorter construction  bypassing affine Lie algebra representations, relying instead on reduction to positive characteristic.

More precisely, we use the (easy) construction of automorphic $\D$-modules $\MM_L$ for a generic local system $L$ on a curve over a field of positive characteristic \cite{BB}. 
We establish the following property of this construction playing a crucial role in the present work:
 the (critically twisted) $\D$-module $\MM_L$ is generated by a global section if and only if the local system $L$ carries the structure of an oper.
The proof of this property is closely related to  ideas of \cite{BD}. 

As a formal consequence of that property we deduce that 
the (partially defined) geometric Langlands equivalence in positive characteristic of \cite{BB} sends the free
critically twisted $\D$-module to the structure sheaf of space opers, see section \ref{secfil} 

This allows us to show that
 global sections of the sheaf of critically twisted differential operators (i.e., global sections of the corresponding free rank~$1$ twisted $\D$-module) on $\Bun_n$ is a flat deformation of the ring of functions on the Hitchin base,
 by first doing it in positive characteristic and then deducing the general case by a standard argument. The construction of automorphic $\D$-modules corresponding to opers follows from this in view of the observation from section \ref{secfil}.

Let us mention that the theme started in \cite{BB}, where geometric Langlands duality was established for $GL_n$ local systems with a smooth spectral curve has been developed
 in \cite{Gr} where the case of not necessarily smooth spectral curves has been treated, in
  \cite{CZ}, \cite{CZ1} dealing with $G$ local systems for $G\ne GL_n$ and in \cite{Tr} where study of quantum geometric Langlands duality in that setting is initiated. 
  However, the present note is the first work (to the authors' knowledge) where this type of result is connected
 to the original setting of a characteristic zero base field.

\bigskip

The argument is based on an interesting property of  the so called $p$-Hitchin map ($p$-curvature) map $h_p$ restricted to the space of opers $\Op$ established in Appendix A\@.
Roughly speaking, it says that the map $h_p|_{\Op}$ is asymptotic to Frobenius at infinity (see Lemma \ref{pi_p} for a precise formulation).
This implies that $h_p|_{\Op}$ is finite, flat of degree $p^d$ where $d=\dim(\Bun_G)$ (Theorem \ref{a main}). 
In contrast with some other steps in the argument, this phenomenon is special to positive characteristic, it has not, to our knowledge, appeared in previous
works in geometric Langlands duality (a somewhat related concept of a dormant oper defined by Mochizuki \cite{Mo} has been studied in \cite{JP}, \cite{W} etc).
For the group $G=GL(1)$ the map $h_p|_{\Op}$ is described the Hasse-Witt matrix (cf. Remark \ref{rem_HW}), so one can consider $h_p|_{\Op}$ a {\em noncommutative} counterpart 
of the {\em Hasse-Witt matrix}. 

We refer the reader to Remark \ref{rem_EFK}  for a discussion of the formal parallel between a resulting description of the spectrum of global twisted differential
operators on $\Bun_G$ acting on half-forms and a conjecture of \cite{EFK} describing the spectrum of such a ring of differential operators acting on 
the Hilbert space of $L^2$ sections of half forms on the moduli space of bundles on a complex curve. 

\bigskip

We finish the Introduction with a technical remark.
Below we use a general construction, the derived category of asymptotic $\D$-modules on stacks. Here by an asymptotic $\D$-module on a smooth
algebraic variety $X$ we mean a sheaf of modules over the sheaf of rings $D_\hbar(X)$, the
sheaf of Rees algebras corresponding to the sheaf of filtered algebras $D(X)$. Thus $D_\hbar(X)$
is a flat sheaf of rings over polynomials in $\hbar$, such that $D_\hbar(X)/(\hbar -1)=D(X)$,
$D_\hbar (X)/\hbar = \O(T^*X)$. We refer to \cite{L} for a discussion of standard functors on
the derived category of $D_\hbar$-modules. The proof of Proposition \ref{filtr} relies on an extension
of this theory to smooth algebraic stacks over a field of an arbitrary characteristic which does not seem to be documented in the literature.

The proof of Lemma \ref{OtC} uses rudimentary theory of derived stacks (the only derived stacks appearing
here are derived fiber products of ordinary stacks), we use \cite{T} as a general reference for their basic properties.

\medskip

{\bf Acknowledgements.} It is an honor for us to dedicate this work to David Kazhdan whose influence on the subject and on the authors can not be overestimated.

The first author would like to  thank G.~Laumon who several years ago has
asked him if the result of \cite{BB} can be applied to the characteristic zero setting.
R.B. was supported by  NSF grant  DMS-2101507. % and a Simons Foundation Fellowship during preparation of this article.
T.-H.C. is supported by NSF grants DMS-2001257 and DMS-2143722.
X.Z.  is supported by NSF grants DMS-1902239 and
DMS-2200940.

\section{Notations and statement of the main result}
We mostly work over a %an algebraically closed
field $k$ of characteristic different from 2, we fix
a complete smooth geometrically connected curve $C$ over $k$ of genus at least 2. Let $G=GL_n$, $\Bun$ will denote the moduli stack
of rank $n$ vector bundles over $C$; let $\Bun^d$ denote the component of parametrizing bundles of degree $d$.

Recall the stack $\uBun$ with a map $\Bun\to \uBun$ which is a $\Gm$-gerbe,
\cite[\S 4.6]{BB}. The categories of coherent sheaves and $\D$-modules on $\uBun$ and $\Bun$ are closely related, while
$\uBun$ has the advantage of being {\em good} in the sense of \cite{BD}; we let $\uBun^d
\subset \uBun$ be the image of $\Bun^d$.

Let $\DBumod$ be the category of twisted $\D$-modules on $\Bun$, where the class of the twisting
equals  half of the class corresponding to the canonical line bundle on $\Bun$
and similarly for $\uBun$.
Notice that a square root of the canonical line bundle on $\Bun$ is known to exist, thus this category is equivalent to the category of $\D$-modules on $\Bun$, respectively
$\uBun$.
%we fix such a square root  and let $\DBumod$ denote the corresponding
Let $\DBu\in \DBumod$ denote the sheaf of twisted differential operators with the same twisting.
%Thus $\DBu$ is a sheaf of rings on $\Bun$.
%\red{The $\D$-module $\DBu$ is {\em not} a sheaf of rings~!!!
%(And we don't need a sheaf of rings.)
(Notice that $\DBu$ is {\em not} a sheaf of rings on $\Bun$, see \cite[Sect.~1.1.3]{BD}.)
Let also $D_{\Bun}$ be the derived category
of $\D$-modules on $\Bun$. Thus $\DBumod$ is the heart of the natural $t$-structure on $D_{\Bun}$.
%and we have a natural functor from $\DBumod$ to the category of sheaves of modules over
%$\DBu$ which need not be an equivalence (as $\Bun$ is not an algebraic variety).
We also use similar notations with $\Bun$ replaced by $\uBun$.

 Let $\Op$ denote the space of  {\em marked} opers, see \cite{BDop} for a general introduction
 to this notion, see also the definition (in the version we use) below before Corollary \ref{filtr_cor}.

The main result of this note is the following

\begin{Thm}\label{main}
\begin{enumerate}
\rncmd\theenumi{\alph{enumi}}
\item For every $d\in {\mathbb{Z}}$ we have a canonical isomorphism $\Gamma(\uBun^d,\DBuu)\cong \Gamma(\O_\Op)$.

\item For a point $x\in \Op$ corresponding to a local system $\L_x$ the $\D$-module $\DBu\otimes_{\O_\Op} k_x$ is a Hecke eigenmodule with respect to the  local system $\L_x$.
\end{enumerate}
\end{Thm}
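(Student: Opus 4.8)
The plan is to transport the free critically twisted $\D$-module $\DBuu$ through the positive-characteristic geometric Langlands equivalence of \cite{BB}, identify its image with the structure sheaf of opers, and then descend to characteristic zero by flatness. The two sides of (a) are filtered algebras sharing an associated graded: $\Gamma(\uBun^d,\DBuu)$ carries the order filtration whose symbols, by Hitchin's theorem, are the functions $\O(\B)$ on the Hitchin base $\B$, while $\Gamma(\O_\Op)$ is filtered with $\gr\,\Gamma(\O_\Op)=\O(\B)$ by the definition of marked opers. The observation of Section \ref{secfil}, which relates the Hecke-functor image of the free $\D$-module to opers and is insensitive to the characteristic, furnishes in every characteristic a canonical filtered homomorphism $c\colon\O(\Op)\to\Gamma(\uBun^d,\DBuu)$; the content of (a) is that $c$ is an isomorphism. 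Both algebras spread out to flat families over a finitely generated base, so it suffices to prove that $c$ is an isomorphism after reduction modulo $p$ for a dense set of primes, the characteristic-zero statement then following by flatness.

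In characteristic $p$ the $p$-curvature endows $\DBuu$ with a large center and, over the locus of smooth spectral curves in $\B$, realizes it as an Azumaya algebra over the Frobenius-twisted cotangent stack; the equivalence $F$ of \cite{BB} is the resulting Fourier--Mukai transform along the Hitchin fibration, matching the generic part of the derived category of $\DBuu$-modules with coherent sheaves on the dual Hitchin space. The crucial step is to show that $F$ carries the free module $\DBuu$ to the pushforward $\pi_*\O_\Op$ of the structure sheaf of opers, where $\pi$ is the (closed) embedding of $\Op$ into the dual space; this is exactly the content of Section \ref{secfil}, since the Hecke eigen-data of the free module is governed by opers, so its Fourier--Mukai image is the structure sheaf of the oper locus. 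As $F$ is an equivalence it induces an isomorphism of graded endomorphism algebras, which in cohomological degree zero reads
\[
\Gamma(\uBun^d,\DBuu)=\operatorname{End}^0(\DBuu)\ \cong\ \operatorname{End}^0(\pi_*\O_\Op)=\Gamma(\O_\Op),
\]
the right-hand equality because $\pi$ is a closed embedding. This identification is realized by the canonical map $c$, so $c$ is an isomorphism modulo $p$; by flatness it is an isomorphism in characteristic zero as well, proving (a).

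Given (a), part (b) is a formal consequence of the same Section \ref{secfil} input. Under the isomorphism $\Gamma(\O_\Op)\cong\Gamma(\uBun^d,\DBuu)$ the functions on $\Op$ act on the sheaf $\DBu$, and for a point $x\in\Op$ the specialization $\DBu\otimes_{\O_\Op}k_x$ is the $\D$-module whose Hecke eigen-data, by Section \ref{secfil}, is precisely the local system $\L_x$ underlying the oper $x$; hence it is a Hecke eigenmodule with eigenvalue $\L_x$. In characteristic $p$ this can also be read off from $F$: base change of $\pi_*\O_\Op$ at $x$ is the skyscraper at $\pi(x)$, and skyscrapers are the Hecke eigensheaves of the Fourier--Mukai equivalence, with eigenvalue the local system parametrized by that point.

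The main obstacle is the identification $F(\DBuu)\cong\pi_*\O_\Op$, that is, Proposition \ref{filtr} together with its passage through $F$. Two points require real work. First, the Hecke computation must be performed for the free asymptotic ($D_\hbar$) $\D$-module on the stack $\uBun$, which, as noted in the Introduction, demands an extension of the theory of $D_\hbar$-modules to smooth stacks over a field of arbitrary characteristic. Second, one must control $F$ on global sections and endomorphisms over the non-proper, stacky $\Bun$ --- verifying that it matches $\operatorname{End}^0$ of the free module with $\Gamma(\O_\Op)$ with no spurious contributions --- and must ensure that the spread-out families are uniform enough for the flatness descent to apply. By contrast, the reductions of the first paragraph and the derivation of (b) are routine once Proposition \ref{filtr} is available.
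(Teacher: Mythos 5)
Your high-level strategy is the paper's own, but at the two places where you write that the key identifications are ``exactly the content of Section~\ref{secfil}'' there are genuine gaps. First, the output of Section~\ref{secfil} (Corollary~\ref{filtr_cor}) only shows that $\Phi(\DBuus)$ is \emph{set-theoretically} supported on the oper locus; it does not identify it with the pushforward of $\O_{\Op_s}$. To rule out a thickening, a higher-rank sheaf, or a nontrivial line bundle on the support, the paper needs the appendix result (Proposition~\ref{finite_flat}): the composition $\Op\to\Loc\to B^{(1)}$ is finite and \emph{flat} of degree $p^{\dim B}$. This is what makes the rank count work: exactness of Fourier--Mukai on finite-support sheaves shows $\Phi((\DBuud)_s)$ is a sheaf of generic rank exactly $p^d$ over $B^{(1)}_s$, forcing scheme-theoretic support on $\Op_s/\Gm$, then Cohen--Macaulayness gives local freeness of rank one, and triviality follows because $\Op_s$ is open in an affine space. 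Your proposal never invokes this finite-flatness, so your ``crucial step'' $F(\DBuu)\cong\pi_*\O_\Op$ cannot be completed as stated. Second, the equivalence of \cite{BB} exists only over the locus $B_s$ of smooth spectral curves, so your displayed endomorphism computation proves at best $\Gamma((\DBuud)_s)\cong\Gamma(\O_{\Op_s})$, not part (a). Passing from $\O(\Op_s)$ to $\O(\Op)$ is a separate argument in the paper: $A_d=\Gamma(\DBud)$ is a subalgebra of $\O(\Op_s)$ containing $\O(\Op)$, it is finitely generated over $\O(B^{(1)})$ by the symbol argument of Lemma~\ref{finite} (via $\gr A_d\hookrightarrow\Gamma(\gr\DBud)=\O(B)$), and one concludes by normality of $\Op$. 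Your proposal conflates the two loci throughout.

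Part (b) is also not a formal consequence: the skyscraper argument only covers points of $\Op_s$, while the theorem concerns every $x\in\Op$, including opers over non-smooth spectral curves where the equivalence $F$ is unavailable --- and in characteristic zero there is no $F$ at all. The paper instead proves an in-families statement: the adjunction counit gives an isomorphism $H(\DBud[d-1])\cong\DBud\boxtimes_A\Funiv$ (checked locally over $C$ using the local freeness from Proposition~\ref{filtr}), its compatibility with the $A$-actions requires Lemma~\ref{c_comm_A} (proved via torsion-freeness over $\O(B^{(1)})$ and restriction to the $s$-locus, and in characteristic zero by reduction to finite residue fields), and one still needs flatness of $\DBu$ over $A$ (deduced from flatness of the Hitchin map) so that the fiber $\DBu\otimes_A k_x$ lies in the heart. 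Finally, your characteristic-zero descent presumes that $\Gamma(\DBud)$ spreads out to a flat family over the arithmetic base; this is precisely the degeneration at $E_1$ of the spectral sequence $\Gamma(\O_B)\Rightarrow A_d$, which the paper establishes by reduction modulo $p$, and which is also what yields commutativity of $A_d$ in characteristic zero --- a prerequisite for even defining your map $c\colon\O(\Op)\to\Gamma(\uBun^d,\DBuu)$, since the $A_d$-family of opers $\Funiv^d$, hence $\Pi$ and $c=\Pi^*$, only makes sense once $A_d$ is known to be commutative. You flag some of these points as ``obstacles requiring real work,'' but they are the substance of the proof rather than verifications, and the appendix input and the $B_s$-versus-$B$ distinction are missing from your outline entirely.
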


Our strategy is to first establish the Theorem when $k$ has prime characteristic using the result
of \cite{BB} and then formally deduce the characteristic zero case.

\section{Hecke functor and filtrations}\label{secfil}
In this section we introduce a filtration on the image of the free $\D$-module under the  Hecke functor.
This is done uniformly in all characteristics by a direct argument independent of \cite{BB}; the idea is close in spirit to
%the ``birth of opers'' section in
 \cite[\S 5.5]{BD}.

We now recall the definition of
 the Hecke functor corresponding to the tautological representation of $GL_n$.

Let  $\H$ be the stack  parametrizing inclusions of vector bundles of rank~$n$,
  $\E_1\imbed \E_2$, whose cokernel is of length one.
We define $q_1,q_2: \H\to \Bun$ and $q_C: \H\to C$ by $q_i:(\E_1\subset \E_2)\mapsto \E_i$
and $q_C: (\E_1\subset \E_2)\mapsto x$ where $x$ is determined by the short exact sequence
$0\to \E_1\to \E_2\to \O_x\to 0.$

%We also let $\H_x=q_C^{-1}(x)$ and let $q_1^x, q_2^x$ denote the restriction of $q_1$, $q_2$ to
%$\H_x$. Notice that both $q_1^x$ and $q_2^x$ are ${\mathbb P}^{n-1}$ bundles.

We also consider the projections   $q_{i,C}=\angs{q_i,q_C}\colon \H\to\Bun\x C\ (i=1,2)$.
 Notice that both $q_{1,C}$ and $q_{2,C}$ are ${\mathbb P}^{n-1}$ bundles.

The following statement is standard.

\begin{Lem}\label{rel_tan}
The relative tangent bundles $\cT_1$, $\cT_2$ for the maps
  $q_{1,C}$, $q_{2,C}$ admit a nondegenerate pairing
$\cT_1\tsr \cT_2\to q_C^*\cT_C= q_C^*\Omega_C^*$. 

\end{Lem}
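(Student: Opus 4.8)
The plan is to compute the two relative tangent spaces fiberwise at a point $(\E_1\imbed\E_2)\in\H_x$ and to exhibit the pairing as a composition. Write $L:=\E_2/\E_1$ for the length-one cokernel (so $L\cong\O_x$ is a line supported at $x$), and for a bundle $\E$ write $\E|_x:=\E\otimes_{\O_C}\O_x$ for its fiber. First I would identify the two $\mathbb P^{n-1}$-fibers. Fixing $\E_2$, a subsheaf $\E_1$ with the prescribed cokernel is the same datum as the hyperplane $H:=\E_1/\E_2(-x)\subset\E_2|_x$ with $\E_2|_x/H=L$; hence the fiber of $q_2^x$ is the variety of hyperplanes in $\E_2|_x$ and its tangent space is $T_2=\operatorname{Hom}(H,L)$. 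Fixing $\E_1$, an upper modification $\E_2$ is the same as the line $L\subset\E_1(x)/\E_1=\E_1|_x\otimes T_{C,x}$; hence the fiber of $q_1^x$ is $\mathbb P(\E_1|_x\otimes T_{C,x})$ and its tangent space is $T_1=\operatorname{Hom}\bigl(L,(\E_1|_x\otimes T_{C,x})/L\bigr)$. Here I use the canonical normal-bundle identification $\O(x)/\O\cong\O_x\otimes T_{C,x}$, i.e. $\O(x)|_x\cong T_{C,x}$.

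The key step is a canonical four-term exact sequence tying the two fibers together. Restricting the short exact sequence $0\to\E_1\to\E_2\to L\to 0$ to $x$, i.e. applying $-\otimes^{\mathbf L}_{\O_C}\O_x$ and reading off the long exact $\operatorname{Tor}$-sequence, produces
\[
0\longrightarrow L\otimes T_{C,x}^{*}\longrightarrow \E_1|_x\xrightarrow{\ \phi\ }\E_2|_x\longrightarrow L\longrightarrow 0,
\]
whose outer terms are $\operatorname{Tor}_1^{\O_C}(L,\O_x)=L\otimes\operatorname{Tor}_1^{\O_C}(\O_x,\O_x)=L\otimes\Omega_{C,x}=L\otimes T_{C,x}^{*}$ and $L\otimes_{\O_C}\O_x=L$. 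Thus $\phi$ identifies $\E_1|_x/\ker\phi$ with $H=\operatorname{im}\phi=\ker(\E_2|_x\to L)$, and one checks that the line $L\subset\E_1|_x\otimes T_{C,x}$ is exactly $\ker\phi\otimes T_{C,x}$. Substituting these identifications into the formula for $T_1$ gives $(\E_1|_x\otimes T_{C,x})/L\cong(\E_1|_x/\ker\phi)\otimes T_{C,x}\cong H\otimes T_{C,x}$, hence $T_1\cong\operatorname{Hom}(L,H\otimes T_{C,x})$.

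With both tangent spaces expressed through the single hyperplane $H$ and the single line $L$, the pairing is simply composition: for $f\in T_1=\operatorname{Hom}(L,H\otimes T_{C,x})$ and $g\in T_2=\operatorname{Hom}(H,L)$ I set
\[
\langle f,g\rangle:=(g\otimes\operatorname{id}_{T_{C,x}})\circ f\in\operatorname{Hom}(L,L\otimes T_{C,x})=\operatorname{End}(L)\otimes T_{C,x}=T_{C,x},
\]
using $\operatorname{End}(L)=k$ since $L$ is a line. Nondegeneracy is then immediate: as $\dim L=1$, the pairing factors as the tensor product of the perfect evaluation pairings $H\otimes H^{*}\to k$ and $L^{*}\otimes L\to k$ (twisted by $T_{C,x}$), so the induced map $T_1\to\operatorname{Hom}(T_2,T_{C,x})$ is an isomorphism. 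I expect the only delicate point to be the careful bookkeeping of the $T_{C,x}$-twists, concretely the identifications $\O(x)|_x\cong T_{C,x}$ and $\operatorname{Tor}_1^{\O_C}(\O_x,\O_x)\cong\Omega_{C,x}$: it is precisely this $\operatorname{Tor}_1$ term that makes the two relative tangent bundles dual to one another up to a twist by $T_{C,x}$, which is what the nondegeneracy of the pairing records. To globalize, I would run the same construction with the tautological subsheaf and cokernel over $\H_x$, so that $T_1$, $T_2$ and the pairing become morphisms of vector bundles whose nondegeneracy may be checked on fibers exactly as above.
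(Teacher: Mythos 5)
The paper states this lemma without proof (``The following statement is standard''), so there is no argument of the authors to compare against; your proof is correct and is exactly the standard verification that the paper implicitly invokes. Your fiberwise identifications $T_2=\operatorname{Hom}(H,L)$ with $H=\E_1/\E_2(-x)\subset\E_2|_x$ and $T_1=\operatorname{Hom}(L,(\E_1|_x\otimes T_{C,x})/L)$ are right, and the one genuinely delicate point --- the twist by $T_{C,x}$ that makes the two relative tangent bundles dual up to twist rather than dual on the nose --- is handled correctly via $\operatorname{Tor}_1^{\O_C}(\O_x,\O_x)\cong\Omega_{C,x}$. A small simplification worth noting: the identification $(\E_1|_x\otimes T_{C,x})/L\cong H\otimes T_{C,x}$ can be obtained without the $\operatorname{Tor}$ computation, since $(\E_1(x)/\E_1)/(\E_2/\E_1)=\E_1(x)/\E_2\cong\bigl(\E_1/\E_2(-x)\bigr)\otimes\O(x)|_x=H\otimes T_{C,x}$, after which the pairing is the same composition $\operatorname{Hom}(L,H\otimes T_{C,x})\times\operatorname{Hom}(H,L)\to\operatorname{Hom}(L,L\otimes T_{C,x})=T_{C,x}$, perfect because $L$ is a line; your concluding remark that the global statement over $\H_x$ follows by running the construction with the tautological sub- and quotient sheaves and checking nondegeneracy on fibers is also correct.
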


We let $\DC$ denote the sheaf of twisted differential operators on $C$ that act on sections of the line
bundle $\TCn$; here for odd $n$ we use the choice of a square root of
the canonical bundle $\Omega_C$.
We let $D_C$, $\DDBuC$ denote the corresponding derived categories of twisted
 $\D$-modules.

Using the Lemma it is easy to see that
 the sum of pull-backs under $q_1$, $q_2$ and $q_C$
of the above twisting classes equals the class of a line bundle (we will
use a more precise information about this class below); thus
we can define the {\em Hecke functor} between the categories of twisted $D$-modules: $H=(q_{2,C})_* q_1^![- n]: D_{\Bun}\to
\DDBuC$.
We will also need the dual Hecke functor, given by 
$H\chk=(q_{1,C})_* q_2^![-n]: D_{\Bun}\to
\DDBuC$.

Notice that we used smoothness of the Hecke stack to define the Hecke functor uniformly in all characteristics, a direct analogue of this definition for arbitrary reductive group $G$ works for Hecke functors corresponding to minuscule coweights only: doing it for more general weights requires intersection cohomology sheaves which do not admit a direct
generalization to positive characteristic

The functors $H$ and $H\chk$ are adjoint in the following sense: there is a functorial isomorphism 
\begin{equation}\label{adjH}
    \Hom_{\DDBuC}(M\bx(\TCn  \tsr N),H(M'))
    \isoto \Hom_{\DDBuC}(H\chk(M) \shtsr p_C^!N,M'\bx \TCn)
\end{equation} 
for all
 $M,M'\in D_{\Bun}$ and $N\in D_C\hmod$,  where $p_C$ is the projection $\Bun\x C\to C$ and $D_C\hmod$ is the category of \emph{untwisted} $\D$-modules  on~ $C$
  (here we use a choice of a square root of $\cT_C$).  
The above adjunction can also be characterized as follows.  Let us consider the bifunctor $\DDBuC^{\on{op}} \x \DDBuC \to D(C)$,  where $D(C)$ is the derived category of complexes of (untwisted) $D_C$-modules, given by  $(M,M') \mapsto \HomC(M,M') := p_{C,\blt} \cHom_{D_{(\Bun\x C)/C}}(M,M')$.

The functor $\HomC$ is characterized by the following adjunction:
   \[  \Hom_{D(C)}(N,\HomC(M_1,M_2)) \isoto \Hom_{\DDBuC} (M_1\shtsr p_C^! N,M_2)  \]
Using $\HomC$, the isomorphism~\eqref{adjH} can be rewritten as follows: 
\begin{equation}\label{adjH-Hom}
  \HomC(M\bx\TCn  ,H(M'))  \isoto  \HomC(H\chk(M) ,M'\bx \TCn)
\end{equation} 

Let $(\DC)_{\leq n}$ denote the term of the standard filtration by the order of a differential operator.

\begin{Prop}\label{filtr}
The object  $H(\DBu)$ lies in the abelian category $\DBuCmod$. Furthermore, the
$\DBuC$ module $H(\DBu)$  admits a canonical
map $c\colon \DBuC \to H(\DBu)$, %$d\mapsto d(c)$
%generator $c$
%\red{(what is a generator for $\D$-module on a stack---is it a standard terminology? Isn't it
%better to speak of the map $c'$ directly?)},
such that the
restriction of $c$
to $\DBu \boxtimes (\DC)_{<n}$
is an isomorphism of quasicoherent sheaves.

%Consider the module $H(D)$ as a $\DBu\boxtimes \O_C$ module. It admits a canonical filtration
%with associated graded $\bigoplus_{i=0}^n \DBu\boxtimes \Omega^{\otimes i}$.
\end{Prop}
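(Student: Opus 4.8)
The plan is to compute $H(\DBu)=(q_2\times q_C)_!\,q_1^*\DBu$ directly by resolving the pullback and then pushing forward along the proper projective bundle $p:=q_2\times q_C$, reading the answer off the cohomology of its $\bbP^{n-1}$ fibres. Since $q_1$ is smooth of relative dimension $n$ and $\DBu$ is the free (twisted) $\D$-module, $q_1^*\DBu$ is the transfer bimodule $\D_{\H\to\Bun}$, which as a left $\D_\H$-module is the quotient of $\D_\H$ by the left ideal generated by the $q_1$-vertical vector fields; it is therefore resolved by the relative Spencer complex $\D_\H\ot_{\O_\H}\wedge^\bullet T_{q_1}$, placed in degrees $[-n,0]$. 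Substituting this into the formula for the pushforward I obtain $H(\DBu)\cong Rp_*\bigl(\D_{\Bun\times C\leftarrow\H}\ot_{\O_\H}\wedge^\bullet T_{q_1}\bigr)$, the relative de Rham complex of the transfer module along $q_1$, which reduces the whole Proposition to a fibrewise computation over $\Bun\times C$.

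The key geometric input is Lemma \ref{rel_tan}. Over a point of $\Bun\times C$ the fibre of $p$ is the projective space $\bbP^{n-1}=\{\E_1\subset\E_2\}$, and the relative tangent $T_{q_1}$ splits there into the relative tangent $T_1$ of $q_1^x$ and the one–dimensional $C$–direction. The pairing of the Lemma identifies $T_1$ on this fibre with $\Omega^1_{\bbP^{n-1}}\ot\Omega_{C,x}^*$, so that $\wedge^j T_1$ contributes $\Omega^j_{\bbP^{n-1}}\ot(\Omega_{C,x}^*)^{\ot j}$. The twisting classes — half of the canonical class on $\Bun$ and $\Omega_C^{\ot n/2}$ on $C$ — together with the relative dualizing sheaf $\omega_p$ of the transfer module, whose restriction to each fibre is $\O(-n)$, are arranged precisely so that the net line bundle on each fibre is $\O(0)$; Bott's formula then gives that $H^q(\bbP^{n-1},\Omega^p)$ is one–dimensional for $p=q\le n-1$ and vanishes otherwise. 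The degree $+j$ of $H^j(\Omega^j)$ cancels the Spencer degree $-j$, while the remaining $C$–direction field transfers to the operator $\partial_x\in\DC$, whose action (an injective Koszul differential) both removes the would–be off–degree terms and equips $H(\DBu)$ with its $\DC$–direction module structure. Hence $Rp_*$ is concentrated in cohomological degree $0$, which is the first assertion, and the vanishing of $\Omega^j_{\bbP^{n-1}}$ for $j\ge n$ produces exactly the $n$ graded pieces $\DBu\boxtimes(\Omega_C^*)^{\ot j}$, $0\le j<n$, each being $\DBu\boxtimes\gr_j\DC$.

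For the map $c$, I note that since $\DBuC$ is the free module a $\D$–module map $c\colon\DBuC\to H(\DBu)$ is the same as a global section, and I take the canonical section obtained as the image of the unit of $\DBu$ through the $j=0$ (that is, $H^0(\bbP^{n-1},\O)$) term of the complex above. The map $c$ is automatically filtered for the $C$–order filtration, and by the previous paragraph its associated graded is the isomorphism $\gr_j c\colon\DBu\boxtimes\gr_j\DC\xrightarrow{\ \sim\ }\gr_j H(\DBu)$ for every $0\le j<n$, the target having no pieces of $C$–order $\ge n$. Therefore $c$ carries $\DBu\boxtimes(\DC)_{<n}$ isomorphically onto $H(\DBu)$ as quasicoherent sheaves, which is the final clause.

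The main obstacle is twofold and essentially a delicate bookkeeping. First, to run the fibrewise computation and the comparison of associated graded rigorously one wants to pass to the Rees/asymptotic ($D_\hbar$) picture, degenerating the $\D$–module statement to a commutative pushforward of $\O$–modules on the cotangent stacks; this requires the extension of the theory of $D_\hbar$–modules to smooth stacks in arbitrary characteristic flagged in the Introduction. Second, and more pointedly, one must pin down all the twists — the half–canonical twisting on $\Bun$, the $\Omega_C^{\ot n/2}$ twisting defining $\DC$, and the relative–dualizing factor $\omega_p$ — precisely enough to verify that the fibrewise line bundle is $\O(0)$ on the nose, so that Bott applies as stated, and that the $j$–th graded piece is $\gr_j\DC$ rather than an a priori unknown twist of it. This exact matching of twisting classes, uniform in the characteristic, is the heart of the argument.
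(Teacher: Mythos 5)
Your route is genuinely different from the paper's: you attack $H(\DBu)$ head-on with the relative Spencer resolution of $\cD_{\H\to\Bun}$ and a fibrewise Hodge-cohomology computation on the $\bbP^{n-1}$-fibres of $q_2\times q_C$. The paper never does fibrewise Hodge theory. It lifts the free module to an asymptotic $\D$-module $\wtld{\DBu}$, applies the asymptotic Hecke functor, and specializes at $h=0$, where the entire statement becomes the commutative assertion $(pr_2\times pr_C)_* pr_1^*(\O_\Hi)\cong\O_{\tilde C_\univ}$ (Lemma~\ref{OtC}); and that lemma is proved not fibre by fibre but by derived base change from the elementary linear-algebra fact $\pi_*(\O_{\tilde S})\cong\O_S$ for the universal spectral cover $\tilde S\subset\mathfrak{gl}_n\times\bbP^{n-1}\times\Aone$. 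Cyclicity of $\O_{\tilde C_\univ}$ then makes $H(\DBu)$ cyclic with a \emph{canonical} generator, which is where $c$ comes from, and the last clause follows because functions on $\Hi\times T^*(C)$ of degree $<n$ along the fibres of $\Hi\times T^*(C)\to\Hi\times C$ map isomorphically onto $\O_{\tilde C_\univ}$. In effect your Spencer computation re-proves Lemma~\ref{OtC} fibrewise and noncommutatively, which is exactly where the bookkeeping is hardest; the payoff of the paper's route is that all twist and degree bookkeeping is isolated in one global, untwisted, commutative identification.

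As written, your proposal has a genuine gap, concentrated at the two steps you yourself flag but do not carry out. First, the twist-matching claim --- that the half-canonical twist on $\Bun$, the $\Omega_C^{\ot n/2}$ twist defining $\DC$, and $\omega_p$ combine to the trivial line bundle on each fibre --- is precisely what makes Bott's formula give one-dimensional answers in the right degrees and makes the graded pieces come out as $\gr_j\DC$ rather than an unknown twist of it; without it the computation can fail outright (e.g.\ with the bare $\omega_p\ot\wedge^j T_1$ one gets cohomology concentrated in degree $n-1$ and of large rank, not $H^j(\Omega^j)=k$). The paper needs the same information but verifies it once, in the identification \eqref{wtH}, where Lemma~\ref{rel_tan} together with Laumon's $K_X\otimes K_Y^{-1}$ correction does the work. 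Second, degree-zero concentration is not a consequence of Bott alone: $\wedge^j T_{q_1}$ restricted to a fibre has \emph{two} graded pieces, $\Omega^j_{\bbP^{n-1}}\ot T_{C,x}^{\ot j}$ and $\Omega^{j-1}_{\bbP^{n-1}}\ot T_{C,x}^{\ot j}$, so the $E_1$-page has a full row in total degree $-1$; you must prove the $d_1$ differential (right multiplication by a lift of $\partial_x$) is injective on that row and identify its cokernel, and you must do this \emph{relatively} over $\Bun\times C$, not on derived fibres --- over a point you only recover graded pieces, whereas the Proposition asserts a filtered isomorphism with a canonical $c$, which requires degeneration of the hypercohomology spectral sequence and flatness in the family. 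This relative structure is what the paper's $h\to 0$ degeneration plus the global isomorphism $\gr H(\DBu)\cong\O_{\tilde C_\univ}$ deliver in one stroke. Both unverified assertions are plausibly true and your $\cD_h$-on-stacks framework is the right place to prove them (note your Koszul resolution will also meet the derived-fibre-product regularity issue the paper flags in the footnote to Lemma~\ref{OtC}), but until they are supplied the argument is incomplete at its stated heart.
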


\begin{proof}
The proof uses the category of ``asymptotic'' $\D$-modules $\cD_h$ (cf.~\cite{L}).
Recall that for a smooth variety $X$ the sheaf of rings $\cD_h(X)$ is obtained from the filtered
sheaf of rings $\cD(X)$ (differential operators on $X$) by the Rees construction. Thus $\cD_h$
is a sheaf of graded rings on $X$ with a central section $h$, such that $\cD/h$ is isomorphic
to the sheaf  $\O_{T^*X}$, while the localization $\cD_{(h)}$ is isomorphic to $\cD(X)[h,h^{-1}]$, one then considers
the (derived) category of sheaves of graded modules. A similar construction
applies  to twisted differential operators on stacks in the sense of \cite{BD}.
Notice that the subcategory of $h$-torsion free coherent asymptotic $\D$-modules is equivalent to the category of coherent $\D$-modules equipped with a good filtration.

The push-forward and pull-back functors are defined for ``asymptotic'' (twisted) $\D$-modules in a way compatible with the natural (derived) functor from the category of $D_h$ modules to that of $\D$-modules (quotient by $h-1$), see \cite{L}.
Moreover, as shown in {\em loc.\ cit.} the pull-back functor is compatible under the specialization at $h=0$ with the functor
between the derived categories of coherent sheaves on the cotangent bundles given by the natural correspondence; while the push-forward functor under a proper morphism $f:X\to Y$ is compatible with the
functor given by the natural correspondence up to twist by the line bundle $K_X\otimes K_Y^{-1}$.

This theory can be generalized to smooth algebraic stacks. % [???].
 Notice that in the stack case the cotangent bundles and/or the relevant fiber products may have to be taken in the category of derived stacks.

We now proceed to spell this out in the present case.
Let $\Hi=T^*\Bun$. Recall that $\Hi$ is the stack parametrizing {\em Higgs fields},
 i.e., pairs $(\E,\phi)$ where
$\E$ is a rank $n$ bundle on $C$ and $\phi\in H^0(End(\E)\otimes \Omega_C)$.

Let $\H_{\Hi}$ denote the {\em Hitchin Hecke stack}  parametrizing triples $(\E_1\subset \E_2,\phi) $
where $(\E_1\subset \E_2)\in \H$ and the Higgs field
$\phi:\E_2\to \E_2\otimes \Omega_C$ satisfies $\phi(\E_1)\ctd\E_1\tsr\Omega_C$.
%In other words, we have
% $$\H_{\Hi}=(\Hi\times_{ \H zzz

 We have $pr_1, pr_2:\H_\Hi\to \Hi$,
$pr_i:(\E_1\subset \E_2,\phi)\mapsto (\E_i,\phi|_{\E_i})$ and $pr_C: \H_\Hi\to T^*C$
sending $(\E_1\subset \E_2,\phi)$ to $(x,\xi)$ where $x$ and $\xi$ are determined by
$\O_x\cong \E_2/\E_1$, $(\phi -\xi \otimes \id_{\E_2})(\E_2)\ctd \E_1\otimes \Omega_C$.
%\todo[A~priori, we should define $\H_{\Hi}$ as a derived stack, too.]
Here $\H_{\Hi}$ is considered as a derived stack.

The free rank one $\D$-module $\DBu\in\DBumod$ equipped with the standard filtration determines an object
in the category of asymptotic $\D$-modules on $\Bun$ which we denote by
$\wtld{\DBu}$. Applying the Hecke functor $H_{asymp}=(q_{2,C})_* q_1^*$ to this object
(where $H_{asymp}$ denotes the Hecke functor on the category of asymptotic $\D$-modules)
we get an object $\wtld{H(\DBu)}$.

Using the above compatibility of pull-back and push-forward functors with the specialization
at $h=0$ and Lemma \ref{rel_tan}, one checks
 that the corresponding coherent sheaf $\wtld{H(\DBu)}\Ltsr _{k[h]} k$ is given by:
\begin{equation}\label{wtH}
\wtld{H(\DBu)}\Ltsr _{k[h]} k\cong (pr_2\x pr_C)_* pr_1^*(\O_\Hi).
\end{equation}

Let $\tilde C_\univ\subset \Hi \x T^*C$ be the {\em universal spectral curve},
i.e., $\tilde C_\univ$ parametrizes the data of $(\E, \phi; x, \xi)$ where $x\in C$, $\xi\in T^*C|_x$
such that $\det(\phi|_x - \xi \otimes Id)=0$.

\begin{Lem}\label{OtC}
We have $(pr_2\x pr_C)_* pr_1^*(\O_\Hi)\cong \O_{\tilde C_\univ}$.
\end{Lem}

\begin{proof} Consider subvarieties $S\subset \gl_n\x \Aone$ and
$\tilde S\subset \gl_n \x \bbP^{n-1} \x \Aone$ given by:
$S=\{(x,t)\ |\ \det(x-t\cdot Id)=0\}$, $\tilde S= \{ (x,l, t)\ |\ x|_l=t\cdot Id\}$.
Let $\pi:\tilde S \to \gl_n \x \Aone$ be the natural projection.
It is a standard fact that
\begin{equation}\label{piO}
\pi_*(\O_{\tilde S})\cong \O_S.
\end{equation}

Consider the  natural ``evaluation'' map $\Hi \x T^*C \to (\gl_n \x \Aone)/(GL_n \x \Gm)$
(where $GL_n$ acts on $\gl_n$ by the adjoint action and $\Gm$ acts on $\Aone$ by dialtions).
Namely, the line bundle $\Om_C$ defines a $\Gm$-torsor on $C$, and also we have a tautological $GL_n$-torsor on $\Bun\x C$ corresponding to the universal rank~$n$ vector bundle $\cE_\univ$ on $\Bun\x C$.  We pull back both torsors to $\Hi\x T^*C$ and take their fiber product, thus getting a $(GL_n\x\Gm)$-torsor $\cP$ on $\Hi\x T^*C$.  Further, we have canonical sections of the pullbacks of $\Om_C$ to $T^*C$ and of $\cEnd\cE_\univ \tsr\pr_C^*\Om_C$ to $\Hi\x C$.  Using the tautological trivializations of the pullbacks of both bundles to $\cP$, pulling back the canonical sections defines a $(GL_n\x\Gm)$-equivariant map $\cP\to\gl_n\x\Aone$ which, passing to the quotient stacks, gives our map  $\Hi \x T^*C \to (\gl_n \x \Aone)/(GL_n \x \Gm)$.

We have natural isomorphisms:
$$\H_{\Hi}\cong (\tilde S/(GL_n\x \Gm)\x _{(\gl_n \x \Aone)/(GL_n \x \Gm)}
(\Hi\x T^*C);$$
$$\tilde C_\univ\cong (S/(GL_n\x \Gm)\x _{(\gl_n \x \Aone)/(GL_n \x \Gm)}
(\Hi\x T^*C).$$

Here both fiber products are understood to be derived, thus both formulas are isomorphisms of
derived stacks.
%Furthermore, the maps from the second factor to the base in the left hand sides of these isomorphisms
%are flat.
%\red{(Actually, it is not completely obvious that they are flat. In fact, this is equivalent to the Hitchin space for the line bundle
%$\Omega_C\tsr\O(x)$ $(x\in C)$ having the ``right'' dimension. We have to think more about this\ldots)}

Thus base change isomorphism applies, see \cite[\S 3.1 ]{T} and \cite[Proposition 1.4]{T1}.\footnote{In fact,
the base change isomorphism is stated in {\em loc. cit.} for derived product of (derived) schemes rather than 
derived stacks. The fiber product we presently consider is locally a quotient of a fiber product of schemes
by an action of an affine algebraic group; moreover, it is a union of open (derived) substacks of this form.
Thus the base change isomorphism in this case follows from {\em loc. cit.}.}

Thus  the lemma follows from \eqref{wtH}~and~\eqref{piO}. \end{proof}

\begin{Rem} a) Using Koszul resolution one can write down an explicit sheaf of DG-algebras on
$\Bun\x C$, 
%the Cartesian product of the two factors in the displayed formulas above, 
such that its derived
category of sheaves  of modules is identified with the derived coherent sheaves category
of the derived fiber product.
% \todo[The Koszul complexes I had in mind live on the base $\Bun\x C$ and not on the above fiber product. I am not sure that it can be localized to the generalized vector bundle which is the above fiber product]
 Thus one can work with these categories without invoking the
general theory of derived stacks, see \cite{Riche} where this approach is spelled out in another context.

b) We do not know if the derived fiber product in the last displayed formulas is essentially derived, i.e., if some of higher $Tor$'s
between the structure sheaves of the two factors over the structure sheaf of the base are nonzero.
If this is not the case the isomorphisms can be understood as isomorphisms of ordinary
stacks.
\end{Rem}

We are now ready to finish the proof of the Proposition. Recall that an object $\cM$
in the derived category of asymptotic $\D$-modules on a % NICE(? --- \red{probably any})
stack $X$ such that the induced object
$M=\cM\Ltsr_{k[h]}k\in D^b(Coh^{\Gm}(T^*X)$
% \todo[(For non-nice stacks, the tensor product above lies in the DG category of (quasi-)coherent sheaves on the \emph{derived} cotangent bundle quotinted by $\Gm$, but the heart of the t-structure is the same as in the non-derived  case.)]
lies in homological degree zero amounts
to a $\D$-module with a good filtration whose associated graded is isomorphic to $M$.
Thus comparing \eqref{wtH} with Lemma \ref{OtC} we see that $H(\DBu)$ is a $\D$-module
with a good filtration whose associated graded is isomorphic to $\O_{\tilde C_\univ}$.
Since the latter coherent sheaf is cyclic, we see that $H(\D)$ is a cyclic $\D$-module with a canonical generator. Since the sheaf of regular functions on $\Hi \x T^*C$ which have degree
less than $n$ along the fibers of projection  $\Hi \x T^*C \to \Hi \x C$ maps isomorphically
to $\O_{\tilde C_\univ}$, the Proposition follows. \end{proof}

Recall the ring of twisted differential operators $\D_C$ introduced after Lemma \ref{rel_tan}.
By an {\em oper} we will understand an $\O$-coherent $\D_C$ module $\O$ of rank $n$
which has a good filtration whose associated graded is isomorphic to $\gr(\D_C)_{< n}$.
Choosing a theta-characteristic (i.e., a square root of the cotangent bundle)
we can identify the category of $\D_C$-modules with the category of $\D$-modules and connect this with the standard definition of a (\emph{marked}) oper. It is standard that opers in this sense are parametrized
by a variety which we will denote $\Op$.

\begin{Cor}\label{filtr_cor}
Assume that $M\in D^b(\DBumod)$ satisfies the Hecke eigenproperty with respect to a local
system $\L\in \D_C\hmod$. Assume\footnote{The first assumption holds
 automatically if $\chark = 0$. If $\chark=p>0$, then
 $\deg(\L) \equiv (1-g)n(n-1) \pmod p$, and $\deg(\L)$ is determined by the character  by which $\Gm$
 acts on the quasicoherent sheaf underlying $M$ (where we use that $\Bun$ is a $\Gm$ gerbe
 over $\uBun$).  It is not hard to see that
 $\RHom_{\DBumod}(\DBu,M)=0$ automatically unless $\deg (\L)=(1-g)n(n-1)$.}
that $\L$ has degree $(1-g)n(n-1)$
and  does not admit an oper structure. Then
$$\RHom_{\DBumod}(\DBu,M)=0.$$
\end{Cor}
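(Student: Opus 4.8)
The plan is to argue by contraposition: I will show that a nonzero class in $\RHom_{\DBumod}(\DBu,M)$ produces an oper structure on $\L$, contradicting the hypothesis. Since the assertion concerns the full derived $\RHom$, I run the argument for every shift, showing that each morphism $\psi\colon\DBu\to M[i]$ in $D_\Bun$ vanishes. The requirement that the $\Gm$-weights on the two sides match (using that $\Bun\to\uBun$ is a $\Gm$-gerbe) is exactly the degree condition $\deg(\L)=(1-g)n(n-1)$ discussed in the footnote; I take it as given and concentrate on the oper obstruction.

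So suppose $\psi\colon\DBu\to M[i]$ is nonzero. Applying the Hecke functor $H$ and invoking the Hecke eigenproperty $H(M)\cong M\boxtimes\L$ gives $H(\psi)\colon H(\DBu)\to(M\boxtimes\L)[i]$, and precomposing with the canonical map $c\colon\DBuC\to H(\DBu)$ of Proposition~\ref{filtr} yields a section $s:=H(\psi)(c(1))$ of $(M\boxtimes\L)[i]$. The essential input is the structure of $H(\DBu)$ supplied by Proposition~\ref{filtr} and Lemma~\ref{OtC}: $c$ is an isomorphism onto $\DBu\boxtimes(\DC)_{<n}$ while $\gr H(\DBu)\cong\O_{\tilde C_\univ}$, and since the universal spectral curve is the hypersurface $\{\det(\phi-\xi)=0\}$, this identifies $H(\DBu)$ with the cyclic quotient $\DBuC/\DBuC\cdot P$, where $P=\partial_C^{\,n}+\hat a_1\partial_C^{\,n-1}+\dots+\hat a_n$ is a monic order-$n$ operator in the $C$-direction quantizing the characteristic polynomial of the universal Higgs field and its coefficients $\hat a_i$ are the quantum Hitchin Hamiltonians. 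Consequently $s$ satisfies the single relation $P\cdot s=0$.

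It then remains to read off the oper. The $\hat a_i$ act on the eigenmodule $M$ through the commutative algebra of quantum Hitchin Hamiltonians, and compatibility of $P\cdot s=0$ with the eigen-isomorphism $H(M)\cong M\boxtimes\L$ forces them to act by scalar sections $\alpha_i\in H^0(C,\Omega_C^{\otimes i})$; the relation then descends to the monic scalar operator $\bar P=\partial_C^{\,n}+\alpha_1\partial_C^{\,n-1}+\dots+\alpha_n$ annihilating the image of $s$ in $\L$. As $\bar P$ is monic of order $n$ and $\L$ has rank $n$, the image of $s$ becomes a global cyclic vector exhibiting $\L\cong\DC/\DC\cdot\bar P$, i.e.\ a good filtration with $\gr\L\cong\gr(\DC)_{<n}$ --- precisely an oper structure in the sense defined after Lemma~\ref{OtC}. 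This contradicts the assumption that $\L$ admits no oper structure, so $\psi=0$; running the argument over all $i$ gives $\RHom_{\DBumod}(\DBu,M)=0$.

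The main obstacle I expect is this last step: making rigorous that the relation $P\cdot s=0$, in which the Hitchin Hamiltonians act through $M$ and the connection acts through the $\L$-factor, genuinely descends to the monic order-$n$ scalar relation cutting out $\L$ as an oper, rather than defining only a proper $\D_C$-submodule or a relation with the wrong symbol. Two points need care: first, that $s\neq0$ whenever $\psi\neq0$, which I would check by restricting $H(\DBu)\to M\boxtimes\L$ to a slice $\Bun\times\{x\}$, where $H$ degenerates to the $\bbP^{n-1}$-bundle correspondence of Lemma~\ref{rel_tan} and $c$ to the tautological inclusion, recovering a nonzero fiber of $\psi$; second, that the quantum Hitchin Hamiltonians really act on the eigenmodule by honest scalar sections, so that $\gr\L\cong\gr(\DC)_{<n}$ holds on the nose. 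The identification of $P$ with the universal oper operator is exactly where Proposition~\ref{filtr} and the spectral-curve description of Lemma~\ref{OtC} do the decisive work.
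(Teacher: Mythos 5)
There is a genuine gap at your decisive step. The argument hinges on the claim that the coefficients $\hat a_i$ of the quantized characteristic polynomial $P$ act on the eigenmodule $M$ ``through the commutative algebra of quantum Hitchin Hamiltonians'' by scalar sections $\alpha_i\in H^0(C,\Omega_C^{\ot i})$, so that $P$ descends to a scalar operator $\bar P$ with $\L\cong\DC/\DC\bar P$. At this point in the paper nothing of the sort is available: the commutativity of $\Gamma(\DBu)$ and its identification with $\O(\Op)$ --- hence the very existence of quantum Hitchin Hamiltonians acting on eigenmodules by characters --- is the content of part (a) of the main theorem, which is proved \emph{later} and whose proof uses this corollary as input (via Proposition \ref{Ops}). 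So your route is circular as written. Note also that a global section $a\in\Gamma(\DBu)=\End(\DBu)$ does not act on $M$ at all; it acts only on $V=\RHom_{\DBumod}(\DBu,M)$ by precomposition, and the Hecke eigenproperty $H(M)\cong M\boxtimes\L$ says nothing about this action being by scalars. Two further problems: your section $s=H(\psi)(c(1))$ lives in $H^\bullet(V\otimes R\Gamma(C,\L))$ and can a priori vanish even when $\psi\ne 0$; and even granting a nonzero $s$ killed by a monic order-$n$ scalar operator $\bar P$, the induced map $\DC/\DC\bar P\to\L$ need not be injective or surjective --- ``monic of order $n$ plus rank $n$'' does not make $s$ a cyclic vector. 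An isomorphism can only be extracted from a degree comparison, and indeed the hypothesis $\deg(\L)=(1-g)n(n-1)$, which your sketch never actually invokes, is exactly what such a comparison requires.

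The paper sidesteps all of this by working with the full sheaf on $C$ rather than a single section, and with the $\O$-module skeleton of $H(\DBu)$ rather than an explicit operator $P$. It computes the sheaf-theoretic pushforward $pr_{C*}(H(M))$ in two ways: the eigenproperty gives $pr_{C*}(H(M))\cong V\otimes\L$, while Proposition \ref{filtr} (the map $c$ restricting to an isomorphism of quasicoherent sheaves onto $\DBu\boxtimes(\DC)_{<n}$) identifies the underlying $\O$-module of $pr_{C*}(H(M))$ with $V\otimes(\DC)_{<n}$. Comparing, $V\ne 0$ yields an injective map from $\L$ into a quasicoherent sheaf carrying a filtration with graded pieces the line bundles $\Omega_C^{\ot i}$, $i=0,\dots,n-1$; since source and target have the same degree and the same generic rank, the injection is an isomorphism, and transporting the filtration (which is good, as it comes from the $\D$-module filtration of Proposition \ref{filtr}) equips $\L$ with an oper structure, contradicting the hypothesis. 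No commutativity, no scalar action of Hamiltonians, and no cyclic vector are needed. If you want to salvage your version, you would first have to prove that $M$ is an eigenmodule for $\Gamma(\DBu)$ with eigenvalue an oper --- essentially the theorem being proved --- so the filtration-and-degree argument is not merely cleaner but logically forced here.
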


\begin{proof} The proof will proceed by contradiction.
Let $\L$ be the corresponding local system and
set $V:=\RHom_{\DBumod}(\DBu,M) $, thus $V\ne 0$ by assumption.

Consider $\pr^\O_{C*}(H(M))$, 
%\todo[The notation is confusing because it can also mean a $\D$-module direct image\dots] 
the sheaf direct image of
 the $\D$-module $H(M)$ under the projection
$\pr_C:\Bun_n\x C\to C$.
Then the Hecke eigen-property of $M$ shows that
$$ \pr^\O_{C*}(H(M))\cong V\otimes \L.$$

On the other hand, % (the proof of)
Proposition \ref{filtr} and isomorphism~\eqref{adjH-Hom} imply that the object in the derived
category of quasicoherent sheaves $\pr^\O_{C*}(H(M))$
%carries a filtration (in the derived category) with
%$$gr_i( {\rm oblv}^\D_\O (pr_{C*}(H(M))))\cong \bigoplus \limits_{i=0}^{n-1} V \otimes \Omega_C^{\ot -i}.$$
satisfies:
%\todo[Starting from the third line, the $\DC$-structure is lost, so It is not quite right to use $\HomC$ as it was defined above.]
%\todo[Also, are the first and the last isomorphisms justified enough?] 
\[\begin{split}
\pr^\O_{C*}(H(M))
  & \cong   \RHomC(\DBu\bx\TCn,H(M)) \tsr \TCn  \\
  & \cong   \RHomC(H\chk(\DBu),M\bx\TCn) \tsr \TCn  \\
  & \cong   \pr_{C*}(R\cHom(\DBu\bx(\DC)_{<n},M\bx\TCn)) \tsr \TCn  \\
  & \cong  \RHom_{\DBumod}(\DBu,M) \tsr_\k \cHom_{\O_C}((\DC)_{<n},\TCn)  \tsr_{\O_C} \TCn \\
  & \cong V\tsr(\DC)_{<n}  
\end{split}\]
where $H\chk$ is the adjoint Hecke functor $H\chk\colon\Dmod_\Bun\to\Dmod_{\Bun\x C}$.

Comparing the two displayed isomorphisms we see that ${\rm oblv}^\D_\O(\L)$
admits an injective map into $(\DC)_{<n}$ as a coherent sheaf.
 %with a filtration whose associated graded is a sum $\dsum_i \cT_C^{\ot i}$ ($i=0,\dots,n-1$).
Since an injective map between coherent sheaves on a curve
having the same degree and
the same generic rank has to be an isomorphism, we see that $\L$ has an oper structure. \end{proof}

\section{Proof of the main theorem in the case $\chark=p>0$}
It is easy to deduce the assertion of the theorem for $k$ from the assertion for the algebraic closure of~$k$, so we assume for simplicity that $k$ is algebraically closed.

Recall that $\Hi=T^*\Bun$ and
$\tilde C_\univ$ is the universal spectral curve. Let  $h:\Hi\to B$ be the Hitchin map and
$\pi:\tilde C_\univ\to B$ be the projection.
Let $B_r\supset B_s$
be the open subsets in the Hitchin base $B$  parametrizing the points  $x\in B$ such that
the fiber $\pi^{-1}(x)$ is reduced, respectively, smooth.

In this section we assume that the base field $k$ has prime characteristic $p$.
Then $\DBu$ can be thought of as a sheaf over $\Hi^{(1)}$, where the superscript denotes
the Frobenius twist.

Let $\Loc$ denote the moduli stack of $\DC$-modules which are locally free of rank $n$
as an $\O$-module. % (notation $\DC$ introduced after Lemma \ref{rel_tan}).
Recall \cite{BB} that we have the {\em Frobenius-Hitchin} map $h_p\colon \Loc\to B^{(1)}$; for example,
for $x\in B_s$ the fiber of $h$ over $x$ is the abelian algebraic group $Pic(\tilde C_x)$, while
the fiber of $\pi_p$ over $x^{(1)}\in B^{(1)}$ is the torsor over the abelian algebraic group
$Pic(\tilde C_x)^{(1)}$ (here $x^{(1)}$ denotes the image of $x$ under Frobenius).
%\red{Do we identify our $\Loc$ with the untwisted one from~\cite{BB} by choosing the square root of the canonical bundle on $C$?}

We will need the following result proven in the Appendix.

\begin{Prop}\label{finite_flat}
The composition $\Op\to \Loc \xra{\pi_p} B^{(1)}$ is a flat finite map of degree
$p^N$, where $N=\dim (\uBun)$.
\end{Prop}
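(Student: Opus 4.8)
The plan is to analyze the composite $c\colon \Op\to B^{(1)}$ directly as a morphism of affine spaces and to identify its \emph{leading term} with a relative Frobenius. First I would record that, after choosing a theta-characteristic, both $\Op$ and the Hitchin base $B$ are affine spaces modeled on $\bigoplus_{i=1}^n H^0(C,\Omega_C^{\ot i})$: a marked oper is locally presented as a cyclic $\DC$-module $\DC/\DC\cdot P$ with $P=\partial^n+a_1\partial^{n-1}+\dots+a_n$ a monic order-$n$ operator whose coefficient $a_i$ is a section of $\Omega_C^{\ot i}$, and these coefficients are precisely the coordinates on $\Op$; the same data coordinatize $B$ as the space of characteristic polynomials of Higgs fields. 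In particular $\dim\Op=\dim B=:d$, and I would check this equals $\dim(\uBun)$ (equivalently, the arithmetic genus of a generic spectral curve).

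Next I would make $c$ explicit via $p$-curvature. The Frobenius--Hitchin map $\pi_p$ sends a $\DC$-module to the characteristic polynomial of its $p$-curvature, a point of $B^{(1)}$, so $c$ sends the oper with coefficients $(a_i)$ to the $p$-curvature characteristic polynomial of $\DC/\DC\cdot P$. The heart of the argument is the associated graded of this map. I would grade $\Op$ by assigning $a_i$ the weight $i$, and grade $B^{(1)}$ by assigning its $i$-th coordinate the weight $pi$. The key structural fact, which in the setting of \cite{BB} is the statement that $\gr(\pi_p)$ is the Frobenius twist of the ordinary Hitchin symbol map, is that the top-weight part of $c$ is the relative Frobenius $F_B\colon B\to B^{(1)}$, $a_i\mapsto a_i^{p}$ (up to a linear automorphism of $B^{(1)}$). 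Concretely, expanding $\partial^{p}$ in $\DC$ via the Jacobson/Artin--Schreier formula, every correction to the $i$-th coefficient involves derivatives of the $a_j$ and is therefore of strictly lower weight than $pi$, so only the $p$-power survives in top weight.

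Granting this, I would deduce finiteness, flatness, and the degree by degeneration to the leading term. The relative Frobenius $F_B$ of an affine space of dimension $d$ is finite flat of degree $p^{d}$ (its coordinate ring is free of rank $p^d$ over the image). Since the homogeneous leading part of $c$ equals $F_B$, the leading forms of $c$ have no common zero away from the origin, so $c$ has no fibers escaping to infinity and is proper, hence finite; equivalently, the Rees family interpolating between $c$ and $\gr(c)=F_B$ is flat. As $\Op$ and $B^{(1)}$ are Cohen--Macaulay of the same dimension $d$ and the fibers of $c$ are finite, miracle flatness gives flatness of $c$, and the length of a fiber is constant in the degeneration family and equal to $\deg F_B=p^{d}$; thus $c$ is finite flat of degree $p^{d}$.

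The main obstacle is the $p$-curvature computation of the middle step: controlling the characteristic polynomial of the $p$-curvature of $\DC/\DC\cdot P$ and verifying that, in the Frobenius-scaled grading, all contributions other than $a_i\mapsto a_i^{p}$ have strictly lower weight. I expect this to follow from the classical description of the center of $\DC$ in characteristic $p$ (generated by the $p$-th powers $\partial^{[p]}$ and the Frobenius-twisted functions), but keeping track of the derivative corrections uniformly in $n$ and correctly matching the twist by $\Omega_C^{\ot n/2}$ is the delicate point. As a consistency check, over the smooth locus $B_s^{(1)}$ the fiber of $c$ should be identified with a torsor under the kernel of the relative Frobenius on $Pic(\tilde C_x)$, whose order is $p^{g(\tilde C_x)}=p^{d}$, matching the computed degree.
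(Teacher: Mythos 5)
Your proposal is correct in outline and, at the level of strategy, coincides with the proof given in the paper's appendix: both arguments degenerate $\pi_p$ to the relative Frobenius $F\colon B\to B^{(1)}$ along a filtration, deduce finiteness of $\pi_p$ from finiteness of the associated graded map, obtain flatness by miracle flatness (a finite map between smooth varieties of the same dimension $d$), and read off the degree $p^{d}$ from the Frobenius. Where you differ is in how the key identification ``leading term of $\pi_p$ = Frobenius'' is established. You propose a hands-on computation: present an oper locally as a cyclic module $\DC/\DC\cdot P$ with $P$ monic of order $n$, compute the characteristic polynomial of its $p$-curvature via Jacobson's formula, and check that in the weighted grading every correction to $a_i\mapsto a_i^p$ has strictly lower weight. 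The paper avoids this computation entirely by a structural device: it introduces marked $(G,\lambda)$-opers, which assemble into a ${\mathbb G}_m$-equivariant flat family $\widetilde{\Op_G}\to {\mathbb A}^1$ with fiber $\Op$ at $\lambda=1$ and fiber $B$ at $\lambda=0$ (via the Kostant section), together with the $p$-curvature of a $\lambda$-connection, $\Psi(\nabla_\lambda)(v)=\nabla_\lambda(v)^p-\lambda^{p-1}\nabla_\lambda(v^p)$; specializing at $\lambda=0$ gives $\pi_{p,0}=F\circ h$ on the nose, and the standard dictionary between filtered algebras and graded flat $k[t]$-algebras (your ``Rees family'') then yields exactly your leading-term claim as Lemma \ref{gr} of the appendix. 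This buys two things: no coefficient bookkeeping, and uniformity in the group (the appendix proves the statement for any reductive $G$ with $p\nmid |\rW|$, not just $GL_n$).

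Two caveats about your route, both fixable. First, the step you yourself flag as delicate is the real content: beyond tracking Jacobson-formula corrections uniformly in $n$, note that the coefficients $a_i$ are not canonical global coordinates on $\Op$ (the sub-leading coefficients depend on choices, so globally $\Op$ carries only a canonical \emph{filtration} with $\gr(R_{\Op})\cong R_B$, not a grading); hence the argument must be run with filtrations and the Rees degeneration rather than with a literal weight decomposition of $c$ --- at which point you have essentially reconstructed the paper's mechanism. Second, your closing consistency check is sound and worth keeping: since $g(\tilde C_x)=\dim B=d$, the kernel of Frobenius on $Pic(\tilde C_x)$ has rank $p^{d}$, matching the computed degree over $B_s^{(1)}$.
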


Set $\Hi_s=h^{-1}(B_s)$, $\Loc_s=\pi_p^{-1}(B_s^{(1)})$, $\Op _s=\Op\x _{B^{(1)}} B_s^{(1)}$,
%\red{
$\DBus= \DBu|_{\Hi_s^{(1)}}$,
%: should be more careful and distinguish sheaves on $\Bun$ from sheaves on $\Hi^{(1)}$.}
where in the last expression we use the same notation $\DBu$ for the object in $D_{\Bun}$
and the corresponding sheaf on $\Hi^{(1)}$.

Recall that the main result of \cite{BB} is an equivalence\footnote{In fact, this is a version
of the equivalence constructed in {\em loc.\ cit.}: there the stack $\Bun$ and ordinary $\D$-modules
are considered instead
of $\uBun$ and twisted $\D$-modules. It is not hard to deduce that version from the result of
{\em loc.\ cit}.  \nopagebreak Note in particular that replacing  $\Bun$ by  $\uBun$ in the left-hand side corresponds to restricting to one of the connected components of $\Loc$ in the right-hand side.}
$$D^b(\DBuus\hmod_{coh}) \cong D^b(Coh(\Loc_s^{(1-g)n(n-1)})),$$
where $\hmod_{coh}$ stands for the category of coherent sheaves of modules, and $\Loc_s^{(1-g)n(n-1)})$ stands for the component of $\Loc_s$ classifying $\D_C$-modules (locally free of rank~$n$ over $\O_C$ with smooth $p$-spectral curve) whose underlying $\O_C$-module has degree $(1-g)n(n-1)$.
We let $\Phi$ denote that equivalence.
%\red{We should mention that we need a modified version of the equivalence in~\cite{BB}:
%namely, we have twisted $\D$-modules and also use $\uBun$ instead of $\Bun$.
%We should explain how to deduce it from the original version.}

The first step in the proof of the Theorem is the following

\begin{Prop}\label{Ops}
We have $\Phi(\DBuus)\cong (\Op_s\to \Loc_s)_*(\O_{\Op_s})$.
\end{Prop}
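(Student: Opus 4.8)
The plan is to combine three inputs: the equivalence $\Phi$ of \cite{BB} together with its compatibility with the Hecke functor and with the Frobenius--Hitchin structure, Corollary \ref{filtr_cor}, and Proposition \ref{finite_flat}. Write $\mathcal G=\Phi(\DBuus)$ for the object to be identified, and let $s\colon \Op_s\to\Loc_s$ be the structure map. The starting point is that $\Phi$, being a geometric Langlands equivalence, carries a Hecke eigenmodule with eigenvalue $\L_x$ to the skyscraper $k_x$ at the corresponding point $x\in\Loc_s$; thus $M_x:=\Phi^{-1}(k_x)$ is such an eigenmodule, and since $\Phi$ is an equivalence,
$$\RHom_{\Loc_s}(\mathcal G,k_x)\cong \RHom_{\DBumod}(\DBuus,M_x)$$
for every closed point $x\in\Loc_s$.

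First I would determine the support of $\mathcal G$. By Corollary \ref{filtr_cor} (transported from $\Bun$ to $\uBun$, and with the degree hypothesis holding on the relevant component by the footnote to that Corollary), the right-hand side vanishes whenever $\L_x$ admits no oper structure; hence $\RHom_{\Loc_s}(\mathcal G,k_x)=0$ and $x\notin\mathrm{Supp}(\mathcal G)$. Since the locus of points of $\Loc_s$ whose underlying local system carries an oper structure is exactly $s(\Op_s)$, this gives $\mathrm{Supp}(\mathcal G)\subseteq s(\Op_s)$; moreover the eigenmodules $M_x$ are honest $\D$-modules, so $\mathcal G$ is a genuine coherent sheaf concentrated in degree zero supported on $s(\Op_s)$.

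Next I would produce the module structure and a comparison map. The crucial structural fact, part of the construction of $\Phi$ in \cite{BB}, is that $\Phi$ is linear over the Frobenius--Hitchin base: the image of $\O_{B^{(1)}}$ in the center of $\DBuus$ via the $p$-curvature acts on the $\D$-module side, $\O_{B^{(1)}}$ acts on $\Loc_s$ via $\pi_p$, and $\Phi$ intertwines the two. Combined with the support computation and the fact that $s$ is a locally closed immersion, this presents $\mathcal G$ as $s_*\mathcal G'$ for a coherent sheaf $\mathcal G'$ on $\Op_s$. The canonical generator of $H(\DBu)$ furnished by Proposition \ref{filtr}, read through $\Phi$, then supplies a canonical comparison map relating $\mathcal G'$ to $\O_{\Op_s}$.

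Finally I would fix the rank by pushing forward to $B_s^{(1)}$. On one side $(\Loc_s\to B_s^{(1)})_*\,s_*\O_{\Op_s}=(\Op_s\to B_s^{(1)})_*\O_{\Op_s}$ is locally free of rank $p^d$, $d=\dim(\uBun)$, by Proposition \ref{finite_flat}; a parallel computation using the Azumaya property of $\DBu$ over $\Hi^{(1)}$ in characteristic $p$ together with the $\O_{B^{(1)}}$-linearity of $\Phi$ identifies the pushforward of $\mathcal G$ to $B_s^{(1)}$ as locally free of the same rank $p^d$. Hence $\mathcal G'$ is a line bundle and the canonical map above, being an isomorphism after this rank count, is an isomorphism, yielding $\mathcal G\cong s_*\O_{\Op_s}$. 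The main obstacle I anticipate is precisely this last step: upgrading equality of supports and of generic ranks over $B_s^{(1)}$ into an honest isomorphism requires knowing that $\mathcal G$ is flat over $B_s^{(1)}$ of the correct rank $p^d$, so that $\mathcal G'$ is an everywhere-nonvanishing line bundle rather than merely generically one, and that the canonical generator trivializes it; pinning down the behaviour of $\Phi$ on the relevant $\O_{B^{(1)}}$-module structures and verifying this flatness is where the substantive work lies.
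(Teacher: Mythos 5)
Your overall skeleton does match the paper's: support inside the oper locus via Corollary~\ref{filtr_cor}, and a rank count of $p^d$ over $B_s^{(1)}$ coming from Proposition~\ref{finite_flat} together with exactness of the fiberwise Fourier--Mukai transform on sheaves finite over the base --- the latter is also the paper's (correct) source of concentration in homological degree zero, whereas your remark that ``the eigenmodules $M_x$ are honest $\D$-modules'' says nothing about the amplitude of $\mathcal G$. But two steps in the middle of your argument have genuine gaps. First, $\Op_s\to\Loc_s$ is \emph{not} a locally closed immersion: an oper, viewed as a point of $\Loc$, has $\Gm$ among its automorphisms, so the map factors as the trivial gerbe $\Op\to\Op/\Gm$ followed by a closed embedding. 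Consequently $s_*\O_{\Op_s}$ decomposes into $\Gm$-isotypic summands, and the paper proves the proposition by matching the weight-$d$ summand with $\Phi((\DBuud)_s)$, using that $\Gm$ acts by $t\mapsto t^d$ on $\Phi(\F)$ for $\F$ supported on $\uBun^d$; you omit this bookkeeping, without which the statement over all of $\uBun$ cannot be assembled. Second, and more seriously, set-theoretic support on the image of $\Op_s$ does not present $\mathcal G$ as a pushforward from $\Op_s/\Gm$: no immersion property excludes a nilpotent thickening in the transverse direction. The paper rules this out by an argument your proposal has no substitute for: $\Phi((\DBuud)_s)$ is torsion free over $\O(B^{(1)})$, so scheme-theoretic support may be tested at the generic point of $B^{(1)}$, where a non-reduced structure would force length at least $2$ at the generic point of $\Op$ and hence generic rank strictly greater than $p^d$ over $B^{(1)}$, contradicting the rank count.

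Your final step also diverges from the paper in a way that leaves a hole. You propose a ``canonical comparison map'' obtained by reading the cyclic generator of $H(\DBu)$ from Proposition~\ref{filtr} through $\Phi$; but $\Phi$ intertwines the Hecke functor with tensoring by a universal sheaf on $\Loc_s\times C$, and it is not explained (nor obvious) how a generator of a $\D$-module on $\Bun\times C$ produces a map of coherent sheaves on $\Loc_s$. The paper needs no such map: flatness over $\O_{B_s^{(1)}}$ makes the sheaf Cohen--Macaulay over the smooth variety $\Op_s$, hence locally free, and comparing degrees over $B_s^{(1)}$ (both equal to $p^d$ by Proposition~\ref{finite_flat}) shows its pullback to $\Op_s$ is a line bundle; triviality is then automatic since $\Op_s$ is open in $\Op$, which is isomorphic to an affine space, so every line bundle on $\Op_s$ is trivial, and the $\Gm$-weight identifies the summand. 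In short, the trivialization you flagged as ``where the substantive work lies'' comes for free from the geometry of $\Op$, while the genuinely substantive steps --- the $\Gm$-isotypic decomposition and the scheme-theoretic support argument --- are precisely the ones your proposal elides.
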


\begin{proof}
It is easy to see that the tautological map from the space  $\Op$ of marked opers to
$\Loc$ is a composition of the map $\Op\to \Op/\Gm$ and a closed embedding
$\Op/\Gm\to \Loc$, where we use the trivial action of $\Gm$ on $\Op$.
Thus the direct image of $\O_{\Op}$ to $\Loc$ decomposes as a direct sum indexed by characters
of $\Gm$; we claim that the summand corresponding to the character $t\mapsto t^d$
is canonically isomorphic to $\Phi((\DBuud)_s))$.

 It follows from Corollary \ref{filtr_cor} that the complex $\Phi((\DBu)_s))$ is supported
on $\Op_s/\Gm$.

From Proposition \ref{finite_flat} we see that its support is finite over
the base $B^{(1)}$. Since Fourier-Mukai transform is exact on sheaves with finite support,
sending such a sheaf of length $r$ into a vector bundle of rank $r$,
we see that $\Phi((\DBuud)_s)$ is concentrated in homological degree zero; moreover, its pull-back to $\Op_s$
is flat of rank $p^N$ as a module over $\O_{B_s^{(1)}}$.

%A modification of the proof of Corollary \ref{filtr_cor} (DO WE NEED IT? MAYBE IT
%FOLLOWS BY DEGREE CONSIDERATIONS?) shows
We claim that $\Phi((\DBuud)_s)$ is scheme theoretically supported on $\Op_s/\Gm$.

First of all, $\Phi((\DBuud)_s)$ is torsion free as an $\O(\B^{(1)})$ module.
Thus it suffices to check this claim over the generic point of $\B^{(1)}$.

To see this notice that a coherent sheaf on $\Loc$ which is generically set theoretically but not scheme theoretically supported on $\Op/\Gm$ would need to have length greater than one at the generic
point of $\Op$. Then its direct image to $B^{(1)}$ would have generic rank greater than
$p^N$, which contradicts the second paragraph of the proof.

Now, flatness of $\Phi((\DBuud)_s)$ over $\O_{B^{(1)}_s}$ implies it's a Cohen-Macaulay
module over $\O_{\Op_s}$. Since $\Op_s$ is smooth, it is actually a locally free module,
since its degree over $\O_{B_s^{(1)}}$ equals that of $\O_{\Op_s}$, we conclude that the pull-back of
 $\Phi((\DBuud)_s)$ to $\Op_s$ is a line bundle on  $\Op_s$. Since $\Op_s$ is an open subvariety in $\Op$ which
 is isomorphic to the affine space, every line bundle on $\Op_s$ is trivial.
 Since $\Gm$ acts by the character $t\mapsto t^d$ on the $\Phi(\F)$ for $\F$ supported
 on $\uBun^d$, we get the statement.
\end{proof}

The Proposition implies that for all $d\in \Zet$
\begin{equation}\label{Gamma}
A_d:=\Gamma(\DBud)\subset \Gamma((\DBud)_s)\cong End((\DBud)_s)\cong \Gamma(\O_{\Op_s})
\end{equation}
is a commutative algebra.

%We will also define an algebra
%$$A:=\bigcap_{d\in\Zet} A_d.$$
%Note that there is an isomorphism $\Bun^d\isoto\Bun^{d+n}$ by tensor multiplying by a line bundle of degree~1, and therefore we have that $A_d$ is periodic in $d$: $A_d=A_{d+n}\ctd\Gamma(\O_{\Op_s})$, so the intersection above reduces to a finite one.  Moreover, since $A_d$ acts on $\DBud$ for every~$d$, we have that $A$ acts on $\DBu$.

Fix some \dinZ.
Proposition \ref{filtr} allows one to construct a family of opers on $C$ parametrized
by  $Spec(A_d)$. The family can be described as a $\DC$-module $\Funiv^d$
with an $A_d$ action and a filtration which is flat over
$ A_d$ and such that $\Funiv^d \otimes _{A_d} k_x$ is an oper
for every $x\in Spec(A_d)$ and is constructed as follows.
The sheaf $\Funiv^d$ is the (sheaf theoretic) direct image to the second factor
of  the $\DBud$-module $H(\DBud[d-1])$, equipped with the natural $A_d$ action
and the filtration coming from Proposition~\ref{filtr}.
%Actually there are two ways to define the $A$-action on $\Funiv$.  One is by transporting the $A_{d-1}$-action from $\DBud[d-1]$ to $H(\DBud[d-1])$ and then to its pushforward $\Funiv$.  Another is to say
The $A_d$-action is defined by presenting $\Funiv^d$ as the pushforward to $C$ of the local Hom from $\D_{\Bun^d\x C}=\D_{\Bun^d}\boxtimes\D_C$ to $H(\DBud[d-1])$ and using $A_d$-action on the first argument.  %(Note that a~priori these are actions of opposite algebras, but we already know that $A$ is commutative.)

%We will see below that $A_d$ is independent of~$d$ and these actions coincide, but for now let us use the second definition.
Actually the definition of this action can be generalized to a functor $\Upsilon\colon \Dmod_{\Bun^d\x C}\to (A_d\otimes \D_C)\hmod$ given by  pushforward to~$C$,  and $\Funiv^d=\Upsilon(H(\DBud[d-1]))$.  Applying the functor $\Upsilon$ to the map $c$ from Proposition~\ref{filtr}, we get a map $\Upsilon(c)\colon \Upsilon(\DBudC) = A_d\otimes \D_C \to \Funiv^d$ which restricts to an isomorphism of $A_d\otimes\O_C$-modules $A_d\otimes(\D_C)_{<n} \to \Funiv^d$.  From this it is straightforward to see that $\Funiv^d$ defines an $A_d$-family of opers.
  %Note that Lemma~\ref{c_comm_A} implies that $\Upsilon(c)$ is an $A$-module map for the other $A$-action on $\Funiv$ as well.  %Since $\Upsilon(c)$ is surjective, this implies that the two actions coincide.
%EXPLAIN WHY THIS IS AN OPER.
%(USE THAT THE FILTRATION IS SPLIT OVER EACH POINT OF $C$).

 Thus we get a map $\Pi: Spec(A_d) \to \Op$.  We will show that it is an isomorphism.

 It is easy to deduce from the Hecke eigen-property for the equivalence $\Phi$ that  base-change of $\Pi$ from $B^{(1)}$ to $B_s^{(1)}$ coincides with the (dual of) isomorphism \eqref{Gamma}.  From this we see that the composition \[
    \O(\Op) \xrightarrow{\Pi^*} A_d\subset\Gamma(\DBuuds) \overset{\text{Prop.~\ref{Ops}}}\cong \O(\Op_s)
 \]
 is the natural inclusion.  Thus $A_d$ is isomorphic to subalgebra of $\O(\Op_s)$ containing $\O(\Op)$.  Since $\Op$ is normal (it is isomorphic to an affine space: indeed, by Lemma~\ref{gr}, part~(1), $\Op$ is affine and there is a filtration on $\O(\Op)$ with $\gr\O(\Op)\cong\O(B)$ which is a polynomial algebra; see also \cite[Proposition~3.2.3]{JP}), it would suffice to show that $A_d$ is finitely generated as a module over $\O(\Op)$.  We will in fact prove finite generation over a smaller algebra.

%\begin{Lem}
%a) $A\cong \Gamma(\DBu|_{\pi^{-1}(B^r)})$.

%b) $A$ is a finite torsion free module over $\O(\B^{(1)})$.
%\end{Lem}

%To finish the argument we will need

\begin{Lem}\label{finite}
$A_d$ is a finitely generated torsion free module over $\O(B^{(1)})$.
%It is an integrally closed domain.
\end{Lem}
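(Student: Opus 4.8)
First I would separate the two assertions. Torsion-freeness is formal: by \eqref{Gamma} we have an inclusion $A_d\subset\Gamma((\DBud)_s)\cong\O(\Op_s)$, and by Proposition~\ref{finite_flat} the map $\Op\to B^{(1)}$ is finite flat, hence so is its base change $\Op_s\to B_s^{(1)}$. Since $B^{(1)}$ is an affine space, $\O(B^{(1)})$ is a domain and $\O(B_s^{(1)})$ is a localization of it, so $\O(\Op_s)$, being flat over $\O(B_s^{(1)})$, is torsion-free over $\O(B^{(1)})$; its $\O(B^{(1)})$-submodule $A_d$ is then torsion-free as well. Here one uses that $B_s$ is dense in $B$, so that a nonzero element of $\O(B^{(1)})$ stays nonzero in $\O(B_s^{(1)})$.

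For finite generation I would use the description of $\DBu$ as a coherent sheaf on $\Hi^{(1)}$: in characteristic $p$ it is an Azumaya algebra over its center $\O_{\Hi^{(1)}}$, so $A_d=\Gamma(\Hi^{(1)},\DBu)$ and the $\O(B^{(1)})$-module structure is the one induced by the Frobenius--Hitchin map $h^{(1)}\colon\Hi^{(1)}\to B^{(1)}$. It therefore suffices to show that the pushforward $h^{(1)}_*\DBu$ is a coherent sheaf on $B^{(1)}$: since $B^{(1)}$ is an affine scheme, its global sections $A_d$ are then automatically a finite $\O(B^{(1)})$-module. Equivalently, in filtered language, one equips $A_d$ with the order filtration; because $\gr\DBu\cong\O_\Hi$ one has $\gr A_d\hookrightarrow\Gamma(\Hi,\O_\Hi)$, with $\O(B^{(1)})$ acting through the symbols of the central sections, which are $p$-th powers of the Hitchin Hamiltonians. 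Granting that $\Gamma(\Hi,\O_\Hi)$ is finite over $\O(B)$ and that $\O(B)$ is finite over its subring of $p$-th powers $\cong\O(B^{(1)})$ (immediate since $B$ is an affine space), one gets that $\gr A_d$ is finite over $\O(B^{(1)})$, and the usual filtered-module argument then yields finiteness of $A_d$ itself.

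The main obstacle is precisely the coherence of $h^{(1)}_*\DBu$, equivalently the finiteness of $\Gamma(\Hi,\O_\Hi)$ over $\O(B)$: the stack $\Bun$ is not of finite type and the Hitchin map is not proper on it, so this is not formal. This is where the goodness of $\uBun$ must be used --- the Hitchin map is proper over the quasi-compact semistable locus, while the unstable locus has codimension $\ge 2$ in $\Hi=T^*\Bun$, so coherent sheaves and their sections extend across it by normality, and the required finiteness descends from the good open substack. I expect this properness and extension step, together with the bookkeeping identifying the symbols of the $p$-curvature central sections with $p$-th powers of Hitchin Hamiltonians, to be the only nonformal ingredient.
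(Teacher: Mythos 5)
Your proof is correct and, for the main assertion, is essentially the paper's own argument: equip $A_d$ with the order filtration, use $\gr\DBud\cong(T^*\Bun^d\to\Bun^d)_*\O_{T^*\Bun^d}$ to embed $\gr A_d$ into $\Gamma(\O_{T^*\Bun^d})$, observe that the induced filtration on $\O(B^{(1)})\subset A_d$ is the natural grading multiplied by $p$ (the symbols of the central sections being $p$-th powers of the Hitchin Hamiltonians, i.e.\ the associated graded of the embedding is dual to $\mathrm{Fr}_B\circ h$), and lift finiteness through the filtration. The one step you leave ``granted''---finiteness of $\Gamma(\Hi,\O_{\Hi})$ over $\O(B)$---is exactly where the paper invokes the known, stronger fact that \emph{all} global functions on $T^*\Bun^d$ are pullbacks from the Hitchin base, so $\Gamma(\gr\DBud)=\O(B)$; your proposed detour through properness over the semistable locus and a codimension-$\ge 2$ extension is therefore unnecessary (and as written would need care: ``unstable'' there must mean Higgs-unstable in $\Hi$, not bundle-unstable in $\Bun^d$, and verifying the codimension bound is its own task). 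For torsion-freeness you take a genuinely different but valid route, via the inclusion $A_d\subset\Gamma((\DBud)_s)\cong\O(\Op_s)$ of \eqref{Gamma} together with Proposition~\ref{finite_flat}; the paper instead gets it for free from the same $\gr$ embedding, since $\gr A_d\subset\O(B)$ is torsion-free over $\O(B^{(1)})$ and this lifts to $A_d$. One small inaccuracy in your version: $\O(B_s^{(1)})$ need not be a localization of $\O(B^{(1)})$ ($B_s$ is an open subset, not necessarily principal), but your parenthetical remark already supplies the correct substitute---irreducibility of $B^{(1)}$ and density of $B_s^{(1)}$ make every nonzero $f\in\O(B^{(1)})$ a nonzerodivisor on the stalks of the flat sheaf $\O_{\Op_s}$, which is all the argument needs.
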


\begin{proof}
  Consider the filtration on $\DBu$ and the induced one on $A_d$ by degree of differential operator.   Then we have $\gr \DBud=(T^*\Bun^d \to \Bun^d)_*\O_{T^* \Bun^d}$.  %Fix $d\in\Zet$.
  The induced filtration on $\O(B^{(1)})\imbed A_d$ coincides with the one coming from the grading on $\O(B^{(1)})$ multiplied by $p$, and the associated graded map to this embedding is dual to $\mathrm{Fr}_B\circ h\colon \Hi \to B^{(1)}$.   It is known that all global functions on $T^*\Bun^d$ are pullbacks from the Hitchin base, so we have $\Gamma(\gr\DBud) = \O(B)$.  On the other hand, there is an inclusion $\gr A\imbed\Gamma(\gr\DBud)=\O(B)$.  Thus $\gr A$ identifies with an $\O(B^{(1)})$-submodule in $\O(B)$, therefore it is finitely generated and torsion-free over $\O(B^{(1)})$.  But then so is $A_d$% (for any $d$), and therefore $A$
  , as desired.
\end{proof}

As explained above, the lemma shows that the map $\Pi$ is an isomorphism, so that for any $d$ we have a canonical isomorphism
$$A_d\isom A:=\O(\Op).$$
It also follows that $\Funiv^d$ are identified for all \dinZ, so we write $\Funiv$ for the sheaf isomorphic to all of them.

\begin{Lem}\label{c_comm_A}
  The map $c\colon \DBudC =\DBud \boxtimes \D_C \to H(\DBud[d-1])$ from Proposition~\ref{filtr} intertwines the two $A$-actions coming from the $A$-action on $\DBu$.
\end{Lem}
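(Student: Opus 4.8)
The plan is to exploit that $c$ is a morphism of $\DBudC$-modules and that both $A$-actions are by $\DBudC$-linear endomorphisms, so that the whole statement reduces to a single comparison on the cyclic generator. Indeed, the action on $\DBud$ is by $\D$-module endomorphisms: $A=\Gamma(\DBud)$ acts by right multiplication $a\mapsto r_a$ by global sections, which commutes with the left $\DBud$-structure. Tensoring with $\mathrm{id}_{\DC}$ gives $\DBudC$-linear endomorphisms $r_a\boxtimes\mathrm{id}$ of the source, while applying the additive functor $H$ to $r_a$ produces $\DBudC$-linear endomorphisms $H(r_a)$ of the target. Since $c$ is $\DBudC$-linear and the source $\DBudC$ is the free rank-one module on the generator $1\boxtimes 1$, the two $\DBudC$-linear maps $c\circ(r_a\boxtimes\mathrm{id})$ and $H(r_a)\circ c$ coincide as soon as they agree on $1\boxtimes 1$.

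Evaluating on the generator, write $g=c(1\boxtimes 1)$ for the canonical generator of $H(\DBud)$ furnished by Proposition \ref{filtr}. Then $c\circ(r_a\boxtimes\mathrm{id})$ sends $1\boxtimes 1$ to $\lambda(a)\,g$, where $\lambda(a)$ is the action of the global section $a$ on the $\DBudC$-module $H(\DBud)$ through the output leg $q_2$, whereas $H(r_a)\circ c$ sends it to $H(r_a)(g)$, the action of $a$ through the input leg $q_1$. Thus the lemma reduces to the assertion that a quantized Hitchin Hamiltonian $a\in A$ acts identically through the two legs $q_1,q_2$ of the Hecke correspondence on the generator $g$; this is the incarnation for $H(\DBud)$ of the compatibility of the Hecke functor with the Hitchin Hamiltonians.

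I would prove this equality by passing to the associated graded through the asymptotic picture of Proposition \ref{filtr}. On $\gr H(\DBud)\cong\O_{\tilde C_\univ}$ the symbol of $a$ is a function pulled back from the Hitchin base $B$, since all global functions on $T^*\Bun^d$ are pullbacks along the Hitchin map (as recalled in the proof of Lemma \ref{finite}); and on the Hitchin Hecke stack $\H_\Hi$ one has $h\circ pr_1=h\circ pr_2$ because a length-one upper modification preserves the characteristic polynomial of the Higgs field, hence the point of $B$. Therefore $pr_1^*$ and $pr_2^*$ agree on $\O(B)$, so the two leg-actions coincide on $\O_{\tilde C_\univ}$, i.e.\ on associated graded. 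It then remains to upgrade this to the $\D$-module level: the difference $\lambda(a)-H(r_a)$ is a $\DBudC$-linear endomorphism of $H(\DBud)$ vanishing on the associated graded, and I would conclude it is zero from the $k[h]$-flatness of the asymptotic module $\wtld{H(\DBu)}$ together with the torsion-freeness over $\O(B^{(1)})$ from Lemma \ref{finite}. This last step is the main obstacle, since it amounts to ruling out lower-order quantum corrections to the classical spectral-curve invariance; should the direct argument prove delicate, I would instead restrict to the locus $B_s$, where the equivalence $\Phi$ turns both leg-actions into multiplication by one and the same function on $\Loc_s$ (the Hecke correspondence on $\Loc_s$ lying over the diagonal of $B_s^{(1)}$), and then extend the resulting equality over all of $B^{(1)}$ by torsion-freeness.
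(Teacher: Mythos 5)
Your primary route stalls exactly where you flag it, and the tools you cite cannot close the gap. The classical input is fine: on $\H_\Hi$ one indeed has $h\circ pr_1=h\circ pr_2$, since a length-one modification does not change the characteristic polynomial of the Higgs field, and symbols of elements of $A_d$ do lie in $\O(B)$ (as in the proof of Lemma~\ref{finite}). But a discrepancy between two filtered maps that vanishes on associated graded is not zero --- it merely strictly lowers the filtration. In the asymptotic picture, the difference between $(a\boxtimes 1)\cdot g$ and $H(r_a)(g)$ lifts to a section of $\wtld{H(\DBud[d-1])}$ divisible by $h$; $h$-torsion-freeness (equivalently $k[h]$-flatness) then only lets you write it as $h$ times a lower-degree section, which these hypotheses give you no means to kill, and torsion-freeness over $\O(B^{(1)})$ is likewise perfectly consistent with a nonzero lower-order ``quantum correction.'' Some genuinely new input is needed to rule such corrections out, and the paper never argues this way: even in characteristic $0$ it deduces the analogous statement from the reductions to finite residue fields rather than upgrading a symbol computation.

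Your fallback, however, is precisely the paper's proof. Both the source $\DBudC$ and the target $H(\DBud[d-1])$ are torsion-free over $\O(B^{(1)})\subset Z(\DBumod)$, so it suffices to verify the identity after tensoring with $\O(\Op_s)$ over $\O(\Op)$; applying $\Phi$ in the first factor, Proposition~\ref{Ops} together with the Hecke eigenproperty shows that $\Phi(c_s)$ is a map of $(\O_{\Loc_s}\boxtimes\D_C)$-modules scheme-theoretically supported on $\Op_s/\Gm\times C$, on which the elements of $A=\O(\Op)$ act by multiplication by the same functions on source and target --- so \emph{any} map intertwines the two actions, and your (correct) reduction to the cyclic generator is not even needed. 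One repair to make the fallback airtight: the torsion-freeness you invoke should be that of the source and target \emph{modules}, which Lemma~\ref{finite} does not supply (it concerns $A_d$ itself); it follows instead from Proposition~\ref{filtr}, since the target is $\DBud\boxtimes(\DC)_{<n}$ as a quasicoherent sheaf, or alternatively carries a good filtration with associated graded $\O_{\tilde C_\univ}$, which is torsion-free over the Hitchin base. With that substitution, run the fallback as the proof and keep the symbol computation as a consistency check only.
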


\begin{proof}
  Since  both the source and the target are torsion-free as modules over $\O(B^{(1)})\subset Z(\DBumod)$ (where $Z$ stands for the  center of a category, i.e., the ring of endomorphisms of the
  identity functor), it is enough to show that the statement of the lemma holds after tensoring by $\O(\Op_s)$ over $\O(\Op)$.  The localized map $c_s$ is a morphism in $(\DBuCmod)_s$, and we can apply $\Phi$ in the first factor.  Then Proposition \ref{Ops} and Hecke eigen-property of $\Phi$ imply that $\Phi(c)$ is a map of $(\O_{\Loc_s}\boxtimes \D_C)$-modules scheme-theoretically supported on $\Op_s/\Gm\x C \subset\Loc_s\x C$, and that the elements of $A=\O(\Op)$ act by multiplication by the same functions on this support.
\end{proof}

\subsection{End of proof of Theorem \ref{main} in the positive characteristic case}
%Proposition \ref{Ops} implies that the map $\Pi$ is generically an isomorphism.
%Since its target $\Op$ is smooth and irreducible, we see that $\Pi$ is an isomorphism.

Thus we proved the first part of the theorem.
%Then
The second statement of the theorem follows
%(HOW?)
from the construction of the morphism $\Pi$.
%(??) WE DEFINED IT VIA HOM AND NEED HECKE EIGEN PROPERTY.
 %USE THAT $Hom(M,N)\otimes _{End(M)} M \to N$ is an isomorphism if $N=M^{\oplus d}$.
Indeed, we need to construct an isomorphism
\[H(\DBu)\cong \DBu \boxtimes_A \Funiv,
\]
where we used the following notation: if $A$ is a $k$-algebra, $X, Y$ are
$k$-schemes (or stacks) and $\F$, respectively  $\G$, are quasi-coherent sheaves on $X$,
resp.\ $Y$, with a right, resp.\ left, $A$-actions, then we define a quasi-coherent
sheaf on $X\x Y$: $\F\boxtimes_A \G:=(\F\boxtimes \G)\otimes_{A\otimes A^{op}}A$,
where the rightmost symbol $A$ refers to the regular $A$-bimodule.

We will construct the isomorphism as above for each component $\Bun^d$ of $\Bun$, so fix \dinZ.
Consider the functor $\DBud\boxtimes_A{-}\colon (A\otimes\D_C)\hmod\to \DBudCmod$, which is the left adjoint to the ``$d$'th component'' of the functor $\Upsilon$ used above.  So we have a counit map $a_M\colon \DBud\boxtimes_A\Upsilon(M)\to M$ for any $M\in\DBudCmod$.  We need to check that it is an isomorphism for $M=H(\DBud[d-1])$.  It is clear that if we apply the forgetful functor $\DBuCmod \to \Dmod_{\Bun^d\x C /C}$ to $a_M$ then we will get the counit morphism for the similar adjunction between $\Dmod_{\Bun^d\x C/C}$ and $(A\otimes\O_C)\hmod$.  Now since locally over~$C$, $H(\DBud[d-1])$ is isomorphic to $(\DBud\boxtimes\O_C)^{\oplus n}$ as a relative $\D$-module on $\Bun^d\x C$ over~$C$, and our statement is local in $C$, it suffices to check for $M=\DBud\boxtimes\O_C$ where it is clear.

Now, to deduce Hecke eigen-property, we have to show that the constructed isomorphism commutes with the $A$-action, where the action  on $H(\DBu)$ comes by transport of structure from the $A$-action on $\DBu$ and the action on $\DBu \boxtimes_A \Funiv$ comes from either of the $A$-actions contracted by the tensor product.  For this we consider the commutative diagram
\[\xymatrix@C=1.5em{
\DBud\bx\DC\ar@{}[r]|-{\dis=\!=}\ &\ \DBud\bx_A\Upsilon(\DBudC) \ar[rr]^-{a_{\DBudC}}_-\sim \ar[d]_{\DBud\bx_A\Upsilon(c)}\ &&\ \DBudC \ar[d]_c \\
\DBud\bx_A\Funiv\ar@{}[r]|-{\dis=\!=} \ &\ \DBud\bx_A\Upsilon(H(\DBud[d-1])) \ar[rr]^-{a_{H(\DBud[d-1])}}_-\sim \ &&\ H(\DBud[d-1]).\
}\]
By Lemma~\ref{c_comm_A}, the vertical arrows in this diagram commute with the $A$-action. The top arrow commutes with the $A$-action because of the naturality of $a_M$.  Hence the bottom arrow does, too, which is what we need.
Note that $\DBud\bx_A\Upsilon(\DBudC)$ a~priori has {\em three} $A$-actions, coming from the $A$-actions on $\DBud$, $\Ups$ and $\DBudC$. The first two actions are contracted by the tensor product and hence give rise to the same action, while the last two actions agree because under the isomorphism $\Ups(\DBudC)\isoto A\tsr\DC$ they go to the right, resp.\ left, actions on the first factor, and since $A$ is commutative, they are the same. Note that the first action is adapted to showing that the left vertical arrow commutes with $A$, and the last one is adapted to the horizontal map.
%Recall that we had two different ways to define $A$-action on $\Funiv$, and hence we have three equivalent ways to define $A$-action on $\DBu \boxtimes_A \Funiv$.  If we choose the action on the second factor $\Funiv=\Upsilon(H(\DBu))$ via the $A$-action on $\DBu$ then the statement we want about commuting just follows from naturality of the counit map.

What we just proved implies the following in-families version of Hecke eigen-property.  The $A$-action on the $\D$-module $\DBu$ allows to present it as a pushforward of a $(\Spec A=\Op)$-family $\cM$ of $\D$-modules.  Applying to~$\cM$ the relative version of the Hecke functor~$H$, we get a relative (twisted) $\D$-module on $\Bun\x\Op\x C$ over $\Op$ which we denote by $H_{\Op}(\cM)$.  Then it follows from what we proved that $H_{\Op}(\cM) \cong (\cM \bx \O_C) \tsr (\O_{\Bun}\bx \Funiv')$ where $\Funiv'$ is the $\Op$-family of opers constructed from $\Funiv$.  Taking pullback to a closed point $x$ of $\Op$, we see that the derived specialization of the family~$\cM$, $\cM_x:= L(\{x\}\x\Bun \to \Op\x\Bun)^*\cM = \DBu\Ltsr_A k_x$, is  a Hecke eigen-$\D$-module.

We want to show that $\cM_x$ is actually in the heart of $D(\Bun)$.  In other words, we want to prove that $\DBu$ is flat over~$A$.
Since the $A$-module  $\DBu$ is a (flat) deformation of $\O_{\Hi}$ viewed as a module
over $\O(B)$, this follows from flatness of the Hitchin map.
\qed

%THERE WAS A SENTENCE HERE WHICH SEEMED TO REFER TO ANOTHER ARGUMENT,
%BUT WHAT'S WRONG WITH THIS ONE?

\section{Proof of the main theorem in the case $\chark=0$}

Considering the deformation of $\O_{T^*\Bun^d}$ to $\DBud$ we get an injection 
\[ \gr A_d \into \Gamma(\O_{T^*\Bun^d})=\Gamma(\O_B) .\]

Choose a finitely generated (over $\ZZ$) subring $R\subset k$ and a smooth proper
curve $C_R$ over $R$ whose base change to $k$ is isomorphic to $C$.
We have a similar injection constructed from the moduli stack $\Bun_R$ for vector
bundles over $C_R$:
\[ \gr A_{d,R}\into  \Gamma(\O_{T^*\Bun^d_R})=\Gamma(\O_{B_R}) .\]
%Its base change to a field of positive characteristic is an isomorphism  by the result of the previous section. Hence the spectral sequence itself, as well as its base change to $k$, degenerates at $E_1$.
Here we use that the relative $D$-module $D_{Bun_R/R}$ is flat over $R$.  This follows from the fact that $\underline{Bun}_R$ is a family of good stacks in the sense of 
\cite[\S 1.1.1]{BD}: over a characteristic zero field this is proved  in \cite[Prop.2.1.2]{BD}, the general case is similar.

Since the right hand side is finitely generated  as an $R$-module in every degree, equality of its elements can be checked by reduction modulo all maximal ideals of $R$.  The same property therefore holds for $\gr A_{d,R}$ and hence for $A_{d,R}$ itself.  For every such maximal ideal $\fm$, the residue field $\bbF :=R/\fm$ is finite and  we have an injective map
$\iota_{d,R}\colon   A_{d,R}\otimes_R \bbF \into A_{d,\bbF}$ whose target is commutative by the previous section.  Hence so is the source and, since this holds for all~$\fm$, we get that $A_d$ is commutative too.

%This implies that $A_d$ is commutative for any $d$ , since its base change to any residue field of~$R$ of  almost every prime characteristic is commutative.   \todo[Do we really need the degeneration of the spectral sequence for that?]
Now  %Proposition \ref{filtr}
the construction of the previous section
yields a family of opers parametrized by $\Spec(A_d)$,
given by $\Ups(H(\DBud[d-1]))$ as before.
%(roughly speaking) by $Hom(D,H(D))$.
 Thus we get a map
$\Pi\colon \Spec(A_d) \to \Op$ as explained in the previous section.
Unraveling the definition, one can see that $\Pi=\Pi_R\tsr_Rk$ and $  \Pi_{\bbF} ^* =\iota_{d,R}\circ(\Pi_R^*\tsr_R\bbF)$.
But $\Pi_\bbF$ is an isomorphism by the previous section, and  $\iota_{d,R}$ is injective, hence both
 $\iota_{d,R}$ and $\Pi_R^*\tsr_R\bbF$ are actually isomorphisms.

\quash{
Considering the deformation of $\O_{T^*\Bun^d}$ to $\DBud$ we get a spectral sequence
$$\Gamma(\O_{T^*\Bun^d})=\Gamma(\O_B) \Rightarrow A_d.$$

Choose a finitely generated ring $R$ with a homomorphism  $R\to k$ and a complete
curve $C_R$ over $R$ whose base change to $k$ is isomorphic to $C$.
We can form a similar spectral sequence starting from the moduli stack $\Bun_R$ for vector
bundles over $C_R$.
Its base change to a field of positive characteristic degenerates at $E_1$ by the result
of the previous section. Hence the spectral sequence itself, as well as its base change to $k$, degenerates at $E_1$.

This implies that $A_d$ is commutative for any $d$ , since its base change to any residue field of~$R$ of  almost every prime characteristic is commutative.   \todo[Do we really need the degeneration of the spectral sequence for that?]
Now  %Proposition \ref{filtr}
the construction of the previous section
yields a family of opers parametrized by $\Spec(A_d)$,
given by $\Ups(H(\DBud[d-1]))$ as before.
%(roughly speaking) by $Hom(D,H(D))$.
 Thus we get a map
$\Pi\colon \Spec(A_d) \to \Op$ as explained in the previous section.

Since the base change of $\Pi$ to a field of almost any prime characteristic is an isomorphism,
}

Since the base change of $\Pi_R$ to all finite residue fields of~$R$ are isomorphisms,
we see that $\Pi_R$, and hence $\Pi=\Pi_R\tsr_Rk$, is an isomorphism. This proves the first part of Theorem \ref{main}.
One then proves an analogue of Lemma~\ref{c_comm_A} in characteristic~$0$ by observing that it is enough to prove the statement for the reductions to finite residue fields of~$R$.
After that,
the second part follows %from Proposition \ref{filtr}.
by the argument of the previous section.

\numberwithin{thm}{section}
\newtheorem{corollary}[thm]{Corollary}
\newtheorem{lemma}[thm]{Lemma}
\newtheorem{proposition}[thm]{Proposition}
\newtheorem{conjecture}[thm]{Conjecture}
\newtheorem{thm-dfn}[thm]{Theorem-Definition}

\newtheorem*{coro}{Corollary} %%%% for unnumbered statements

\theoremstyle{definition}
\newtheorem{definition}[thm]{Definition}
\newtheorem{remark}[thm]{Remark}
\newtheorem{example}[thm]{Example}

\numberwithin{equation}{section}

\providecommand{\cC}{{\mathcal C}}% one could define new command for convenience
\providecommand{\fg}{{\mathfrak g}}
\providecommand{\ft}{{\mathfrak t}}
\providecommand{\fl}{{\mathfrak l}}
\providecommand{\fb}{{\mathfrak b}}
\providecommand{\fu}{{\mathfrak u}}
\providecommand{\fp}{{\mathfrak p}}
\providecommand{\fz}{{\mathfrak z}}
\providecommand{\ff}{{\mathfrak F}}
\providecommand{\fa}{{\mathfrak a}}
\providecommand{\fC}{{\mathfrak C}}
\providecommand{\fX}{{\mathfrak X}}
\providecommand{\fc}{{\mathfrak c}}

\renewcommand{\rZ}{{\mathrm Z}}
\renewcommand{\rW}{{\mathrm W}}
\renewcommand{\rU}{{\mathrm U}}
\newcommand{\rUg}{\mathrm U(\fg)}
\newcommand{\rUl}{\mathrm U(\fl)}

\providecommand{\hL}{\widehat{\lambda+\Lambda}}
\providecommand{\ol}{\overline{\lambda}}
\providecommand{\hl}{\hat{\lambda}}
\providecommand{\hol}{\widehat{\lambda+\Lambda}}

\providecommand{\bC}{{\mathbb C}}
\providecommand{\bX}{{\mathbb X}}
\providecommand{\bG}{{\mathbb G}}
\providecommand{\bZ}{{\mathbb Z}}

\providecommand{\tD}{\widetilde{\D}}

\providecommand{\mD}{\mathcal{D}}
\providecommand{\mS}{\mathcal{S}}
\providecommand{\mI}{\mathcal{I}}
\providecommand{\mY}{\mathcal{Y}}
\providecommand{\mX}{\mathcal{X}}
\providecommand{\mE}{\mathcal{E}}
\providecommand{\mF}{\mathcal{F}}
\providecommand{\mQ}{\mathcal{Q}}
\providecommand{\mJ}{J}
\providecommand{\mA}{\mathcal{A}}
\providecommand{\mM}{\mathcal{M}}
\providecommand{\mT}{\mathcal{T}}
\providecommand{\mO}{\mathcal{O}}
\providecommand{\mL}{\mathcal{L}}
\providecommand{\mH}{\mathcal{H}}
\providecommand{\mP}{\mathcal{P}}
\providecommand{\mad }{\mathcal{A}^{\diamond}}
\providecommand{\mG}{\mathcal{G}}
\providecommand{\mN}{\mathcal{N}}
\providecommand{\mB}{\mathcal{B}}

\providecommand{\sX}{\mathscr{X}}
\providecommand{\sY}{\mathscr{Y}}
\providecommand{\sS}{\mathscr{S}}
\providecommand{\sG}{\mathscr{G}}
\providecommand{\sB}{\mathscr{B}}
\providecommand{\sP}{\mathscr{P}}
\providecommand{\sT}{\mathscr{T}}
\providecommand{\sA}{\mathscr{A}}
\providecommand{\sM}{\mathscr{M}}
\providecommand{\scP}{\mathscr{P}}
\providecommand{\sH}{\mathscr{H}}
\providecommand{\sL}{\mathscr{L}}
\providecommand{\sQ}{\mathscr{Q}}
\providecommand{\sC}{\mathscr{C}}
\providecommand{\sD}{\mathscr{D}}

\providecommand{\on}{\operatorname}
\providecommand{\D}{\on{D}}
\providecommand{\h}{\on{H}}
\providecommand{\tl}{t^{\lambda}}
\providecommand{\tU}{\widetilde U}

\providecommand{\oP}{\overline{P}}
\providecommand{\oN}{\overline{N}}
\providecommand{\uP}{\underline{P}}

\providecommand{\ra}{\rightarrow}
\providecommand{\la}{\leftarrow}
\providecommand{\gl}{\geqslant}
\providecommand{\s}{\textbackslash}
\providecommand{\bs}{\backslash}
\providecommand{\is}{\simeq}

\providecommand{\Loc}{\on{LocSys}}
\providecommand{\Bun}{\on{Bun}}
\providecommand{\Sect}{\on{Sect}}
\providecommand{\tC}{\widetilde C}

\providecommand{\nc}{\providecommand}

\providecommand{\fraka}{{\mathfrak a}}
\providecommand{\frakb}{{\mathfrak b}}
\providecommand{\frakc}{{\mathfrak c}}
\providecommand{\frakd}{{\mathfrak d}}
\providecommand{\frake}{{\mathfrak e}}
\providecommand{\frakf}{{\mathfrak f}}
\providecommand{\frakg}{{\mathfrak g}}
\providecommand{\frakh}{{\mathfrak h}}
\providecommand{\fraki}{{\mathfrak i}}
\providecommand{\frakj}{{\mathfrak j}}
\providecommand{\frakk}{{\mathfrak k}}
\providecommand{\frakl}{{\mathfrak l}}
\providecommand{\frakm}{{\mathfrak m}}
\providecommand{\frakn}{{\mathfrak n}}
\providecommand{\frako}{{\mathfrak o}}
\providecommand{\frakp}{{\mathfrak p}}
\providecommand{\frakq}{{\mathfrak q}}
\providecommand{\frakr}{{\mathfrak r}}
\providecommand{\fraks}{{\mathfrak s}}
\providecommand{\frakt}{{\mathfrak t}}
\providecommand{\fraku}{{\mathfrak u}}
\providecommand{\frakv}{{\mathfrak v}}
\providecommand{\frakw}{{\mathfrak w}}
\providecommand{\frakx}{{\mathfrak x}}
\providecommand{\fraky}{{\mathfrak y}}
\providecommand{\frakz}{{\mathfrak z}}
\providecommand{\frakA}{{\mathfrak A}}
\providecommand{\frakB}{{\mathfrak B}}
\providecommand{\frakC}{{\mathfrak C}}
\providecommand{\frakD}{{\mathfrak D}}
\providecommand{\frakE}{{\mathfrak E}}
\providecommand{\frakF}{{\mathfrak F}}
\providecommand{\frakG}{{\mathfrak G}}
\providecommand{\frakH}{{\mathfrak H}}
\providecommand{\frakI}{{\mathfrak I}}
\providecommand{\frakJ}{{\mathfrak J}}
\providecommand{\frakK}{{\mathfrak K}}
\providecommand{\frakL}{{\mathfrak L}}
\providecommand{\frakM}{{\mathfrak M}}
\providecommand{\frakN}{{\mathfrak N}}
\providecommand{\frakO}{{\mathfrak O}}
\providecommand{\frakP}{{\mathfrak P}}
\providecommand{\frakQ}{{\mathfrak Q}}
\providecommand{\frakR}{{\mathfrak R}}
\providecommand{\frakS}{{\mathfrak S}}
\providecommand{\frakT}{{\mathfrak T}}
\providecommand{\frakU}{{\mathfrak U}}
\providecommand{\frakV}{{\mathfrak V}}
\providecommand{\frakW}{{\mathfrak W}}
\providecommand{\frakX}{{\mathfrak X}}
\providecommand{\frakY}{{\mathfrak Y}}
\providecommand{\frakZ}{{\mathfrak Z}}
\providecommand{\bbA}{{\mathbb A}}
\providecommand{\bbB}{{\mathbb B}}
\providecommand{\bbC}{{\mathbb C}}
\providecommand{\bbD}{{\mathbb D}}
\providecommand{\bbE}{{\mathbb E}}
\providecommand{\bbF}{{\mathbb F}}
\providecommand{\bbG}{{\mathbb G}}
\providecommand{\bbH}{{\mathbb H}}
\providecommand{\bbI}{{\mathbb I}}
\providecommand{\bbJ}{{\mathbb J}}
\providecommand{\bbK}{{\mathbb K}}
\providecommand{\bbL}{{\mathbb L}}
\providecommand{\bbM}{{\mathbb M}}
\providecommand{\bbN}{{\mathbb N}}
\providecommand{\bbO}{{\mathbb O}}
\providecommand{\bbP}{{\mathbb P}}
\providecommand{\bbQ}{{\mathbb Q}}
\providecommand{\bbR}{{\mathbb R}}
\providecommand{\bbS}{{\mathbb S}}
\providecommand{\bbT}{{\mathbb T}}
\providecommand{\bbU}{{\mathbb U}}
\providecommand{\bbV}{{\mathbb V}}
\providecommand{\bbW}{{\mathbb W}}
\providecommand{\bbX}{{\mathbb X}}
\providecommand{\bbY}{{\mathbb Y}}
\providecommand{\bbZ}{{\mathbb Z}}
\providecommand{\calA}{{\mathcal A}}
\providecommand{\calB}{{\mathcal B}}
\providecommand{\calC}{{\mathcal C}}
\providecommand{\calD}{{\mathcal D}}
\providecommand{\calE}{{\mathcal E}}
\providecommand{\calF}{{\mathcal F}}
\providecommand{\calG}{{\mathcal G}}
\providecommand{\calH}{{\mathcal H}}
\providecommand{\calI}{{\mathcal I}}
\providecommand{\calJ}{{\mathcal J}}
\providecommand{\calK}{{\mathcal K}}
\providecommand{\calL}{{\mathcal L}}
\providecommand{\calM}{{\mathcal M}}
\providecommand{\calN}{{\mathcal N}}
\providecommand{\calO}{{\mathcal O}}
\providecommand{\calP}{{\mathcal P}}
\providecommand{\calQ}{{\mathcal Q}}
\providecommand{\calR}{{\mathcal R}}
\providecommand{\calS}{{\mathcal S}}
\providecommand{\calT}{{\mathcal T}}
\providecommand{\calU}{{\mathcal U}}
\providecommand{\calV}{{\mathcal V}}
\providecommand{\calW}{{\mathcal W}}
\providecommand{\calX}{{\mathcal X}}
\providecommand{\calY}{{\mathcal Y}}
\providecommand{\calZ}{{\mathcal Z}}

\nc{\al}{{\alpha}} \nc{\be}{{\beta}} \nc{\ga}{{\gamma}}
\nc{\ve}{{\varepsilon}} \nc{\Ga}{{\Gamma}} %\nc{\la}{{\lambda}}
\nc{\La}{{\Lambda}}

\nc{\ad }{{\on{ad }}}

\nc{\aff}{{\on{aff}}} \nc{\Aff}{{\mathbf{Aff}}}
\providecommand{\Aut}{{\on{Aut}}}
%\nc{\Bun}{{\on{Bun}}}
\providecommand{\cha}{{\on{char}}}
\nc{\der}{{\on{der}}}
\providecommand{\Der}{{\on{Der}}}
\nc{\diag}{{\on{diag}}}
\providecommand{\End}{{\on{End}}}
\nc{\Fl}{{\calF\ell}}
\providecommand{\Gal}{{\on{Gal}}}
\providecommand{\Gr}{{\on{Gr}}}
\nc{\Hg}{{\on{Higgs}}}
\providecommand{\Hom}{{\on{Hom}}}
\providecommand{\id}{{\on{id}}}
\nc{\Id}{{\on{Id}}}
\providecommand{\ind}{{\on{ind}}}
\nc{\Ind}{{\on{Ind}}}
\providecommand{\Lie}{{\on{Lie}}}
\nc{\Op}{{\on{Op}}}
\providecommand{\Pic}{{\on{Pic}}}
\providecommand{\pr}{{\on{pr}}}
\providecommand{\Res}{{\on{Res}}}
\nc{\res}{{\on{res}}}
\providecommand{\sgn}{{\on{sgn}}}
\providecommand{\Spec}{{\on{Spec}}}
\providecommand{\St}{{\on{St}}}
\nc{\tr}{{\on{tr}}}
\providecommand{\Tr}{{\on{Tr}}}
\providecommand{\Mod}{{\mathrm{-Mod}}}
\providecommand{\GL}{{\on{GL}}}
\nc{\GSp}{{\on{GSp}}} \nc{\GU}{{\on{GU}}} \nc{\SL}{{\on{SL}}}
\nc{\SU}{{\on{SU}}} \nc{\SO}{{\on{SO}}}

\nc{\nh}{{\Loc_{J^p}(\tau')}}
\nc{\bnh}{{\Loc_{\breve J^p}(\tau')}}
\providecommand{\bGr}{{\overline{\Gr}}}
\nc{\bU}{{\overline{U}}} \nc{\IC}{{\on{IC}}}
\providecommand{\Nm}{{\on{Nm}}}
\providecommand{\ppart}{(\!(t)\!)}
\providecommand{\pparu}{(\!(u)\!)}

\def\xcoch{\mathbb{X}_\bullet}
\def\xch{\mathbb{X}^\bullet}

\providecommand{\AJ}{{\on{AJ}}}

\begin{appendix}
\section{Hitchin map and opers in characteristic $p$ (by Roman Bezrukavnikov, Tsao-Hsien Chen, and Xinwen Zhu)}

This appendix is devoted to the proof of the following statement (Proposition~\ref{finite_flat} of the main text). % in \cite{BT}:
Let $G$ be a connected reductive algebraic group over $k$.
We assume that the characteristic $\chark=p$ does not divide the order of the Weyl group $\rW$ of $G$.

\begin{thm} %[\cite{BT} Proposition 2]
\label{a main}
Let $\on{Op}_{G}$ be the scheme of $G$-opers with marking
(see \S\ref{oper}).
Then the composition
\[\pi_p:\on{Op}_{G}\ra\Loc_G\xrightarrow{h_p} B^{(1)}\]
is finite and faithfully flat of degree $p^{\dim B}$.
Here $h_p$ is the $p$-Hitchin map.
\end{thm}

\begin{remark}
In the case $G=PGL_n$,
the theorem above is a strengthening
of a result of C. Pauly and K. Joshi \cite{JP} who proved that the $p$-Hitchin map on the space of opers is finite.
\end{remark}

\subsection{Notations}
Let $C$ be a complete smooth curve over $k$.
Let $G$ be a connected reductive algebraic group over $k$ of rank $l$.
We denote by
$\fg$ the Lie algebras of $G$. We fix a
Borel subgroup $B_G\subset G$, and let $N$ be its unipotent radical and $T=B_G/N$.
Let $Z(G)$ be the center of $G$. We denote by $G_{ad}=G/Z(G)$, $B_{ad}=B_G/Z(G)$
and $T_{ad}=T/Z(G)$.
We denote the
corresponding Lie algebras by  $\fb$, $\frakn$ and $\ft$. 
%%%%%%%%%%%%%%%%%%%

\subsection{Hitchin
map and $p$-Hitchin map}
In this subsection, we recall the definition of Hitchin and $p$-Hitchin map
following \cite{N,CZ1,BB}.
\subsubsection{Hitchin map}
Let $k[\fg]$ and $k[\ft]$ be the algebras of polynomial functions on
$\fg$ and $\ft$. By Chevalley's theorem, we have an isomorphism
$k[\fg]^{G}\is k[\ft]^{\rW}$. Moreover, $k[\ft]^\rW$ is isomorphic to
a polynomial ring of $l$ variables $u_1,\ldots,u_l$ and each $u_i$ is
homogeneous in degree $e_i$. Let $\fc=\on{Spec}(k[\ft]^W)$. Let  $$\chi:\fg\ra\fc$$ be the map induced by $k[\fc]\is
k[\fg]^G\hookrightarrow k[\fg]$.
This is $G\x\bG_m$-equivariant
map where $G$ acts trivially on $\fc$, and $\bG_m$ acts on $\fc$ through the gradings on $k[\ft]^{\rW}$. Let $\mL$ be an invertible
sheaf on $C$ and $\mL^\x$ be the corresponding
$\bG_m$-torsor. Let
$\fg_{\mL}=\fg\x^{\bG_{m}}\mL^\x$ and
$\fc_{\mL}=\fc\x^{\bG_m}\mL^\x$ be  the $\bG_m$-twist of $\fg$
and $\fc$ with respect to the natural $\bG_m$-action.

Let $\on{Higgs}_{G,\mL}=\on{Sect}(C,[\fg_{\mL}/G])$ be the stack of
section of $[\fg_{\mL}/G]$ over $C$, i.e., for each $k$-scheme $S$
the groupoid  $\on{Higgs}_{G,\mL}(S)$ consists of maps over $C$:
\[h_{E,\phi}:C\x S\ra[\fg_{\mL}/G].\]
Equivalently, $\on{Higgs}_{G,\mL}(S)$ consists of a pair $(E,\phi)$ (called a Higgs bundle), where
$E$ is a $G$-torsor over $C\x S$ and $\phi$ is an element in
$\Gamma(C\x S,\ad (E)\otimes \mL)$. If the group $G$ is clear
from the content, we simply write $\on{Higgs}_{\mL}$ for
$\on{Higgs}_{G,\mL}$.

Let $B_{\mL}=\on{Sect}(C,\fc_{\mL})$ be the scheme of
sections of $\fc_{\mL}$ over $C$, i.e., for each $k$-scheme $S$,
$B_{\mL}(S)$ is the set of sections over $C$
$$b:C\x S\ra\fc_{\mL}.$$
This is called the Hitchin base of $G$.

The natural $G$-invariant projection $\chi:\fg\ra\fc$  induces a map
$$[\chi_{\mL}]:[\fg_{\mL}/G]\ra\fc_{\mL},$$
which in turn induces a natural map
$$h_{\mL}:\on{Higgs}_{\mL}=\on{Sect}(C,[\fg_{\mL}/G])\ra\on{Sect}(C,\fc_{\mL})=B_{\mL}.$$
We call $h_{\mL}:\on{Higgs}_{\mL}\ra B_{\mL}$ the Hitchin map
associated to $\mL$.

We are mostly interested in the case $\mL=\omega$. For
simplicity, from now on we denote $B=B_{\omega}$,
$\on{Higgs}=\on{Higgs}_{\omega}$ and $h=h_{\omega}$, etc. We sometimes also write
$\on{Higgs}_G$ for $\on{Higgs}$ to emphasize the group $G$.

We fix a
square root $\kappa=\omega^{1/2}$ (called a theta characteristic of
$C$). Recall that in this case, there is a section
$\epsilon_{\kappa}:B\ra\on{Higgs}$ of $h:\Hg\to B$, induced by the Kostant section $kos:\fc\ra\fg$. Sometimes, we also call $\epsilon_{\kappa}$ the Kostant section of the Hitchin fibration.
\quash{
\subsubsection{Kostant section}\label{kostant section}
In this section, we recall the construction of the Kostant section
of Hitchin map $h_{\mL}$. For each simple root $\alpha_i$ we choose
a nonzero vector $f_i\in\fg_{-\alpha_i}$. Let $f=\oplus_{i=1}^l
f_i\in\fg$.  We complete $f$  into a $sl_2$ triple $\{f,h,e\}$ and
we denote by $\fg^e$ the centralizer of $e$ in $\fg$. A theorem of
Kostant says that $f+\fg^e$ consists of regular element in $\fg$ and
the restriction of $\chi:\fg\ra\fc$ to $f+\fg^e$ is an isomorphism
onto $\fc$. We denote by $kos:\fc\is f+\fg^e$ to be the inverse of
$\chi|_{f+\fg^e}$. Let $\rho(\bG_m)$ be the $\bG_m$ action on $\fg$
described below. It acts trivially on $\ft$ and on $\fg_{\alpha}$
the action is given by $\rho(t)x=t^{\on{ht}(\alpha)}x$ where
$\on{ht}(\alpha)=\sum n_i$ if $\alpha=\sum n_i\alpha_i$. We have
$\rho(t)f=t^{-1}f$ and $\rho(t)e=te$, in particular $\fg^e$ is
invariant under $\rho(\bG_m)$. We define a new $\bG_m$-action on
$\fg$ by  $\rho^+(t)=t\rho(t)$. We have $\rho^+(t)f=f$ and
$\rho^+(\bG_m)$ preverse $f+\fg^e$. The isomorphism $kos:\fc\is
f+\fg^e$ is $\bG_m$-equivariant where $ f+\fg^e$ is acted by
$\rho^+(\bG_m)$.

For any $k$ scheme $S$, the groupoid $\on{Higgs}_{\mL}(S)$ consists of
maps
\[h_{E,\phi}:S\x C\ra [\fg/G\x\bG_m]\] such that the composition of
$h_{E,\phi}$ with the projection $[\fg/G\x\bG_m]\ra B\bG_m$ is
given by the $\bG_m$-torsor $\rho_L$. Similarly, $B_L(S)$ can be
regarded as maps
\[b:S\x C\ra [\fc/\bG_m]\]
such that the composition of $b$ with the projection $[\fc/\bG_m]\ra
B\bG_m$ is given by $\mL^\x$. Clearly, the Hithch map $h_{\mL}$
is induced by the natural map $[\chi/G\x
\bG_m]:[\fg/G\x\bG_m]\ra [\fc/\bG_m]$.

The diagonal map $\bG_m\ra\bG_m\x\bG_m$ induced a map
\[[\fg/\rho^+(\bG_m)]\ra[\fg/\bG_m\x\rho(\bG_m)].\]
By precomposing with the map
$[\fc/\bG_m]\stackrel{kos}\is[f+\fg^e/\rho^+(\bG_m)]\ra[\fg/\rho^+(\bG_m)]$
we obtain
\[[\fc/\bG_m]\ra[\fg/\bG_m\x\rho(\bG_m)].\]

If the acton of $\rho(\bG_m)$ on $\fg$  factors through the adjoint
action of $G$, for example when $G$ is adjoint, then there is a map
$[\fg/\bG_m\x\rho(\bG_m)]\ra[\fg/\bG_m\x G]$ and it defines
a section
\[[\fc/\bG_m]\ra[\fg/\bG_m\x\rho(\bG_m)]\ra[\fg/\bG_m\x G]\]
of $[\chi/G\x\bG_m]$, in particular, we get a section of
$h_{\mL}$. In the general case, the action $\rho(\bG_m)$ does not factor
through $G$, but its square does and it is given by the co-character
$2\rho:\bG_m\ra G$ where $2\rho$ is the sum of positive coroots. So
if we consider the square map $\bG_m^{[2]}\ra\bG_m$, we get a map
\[\eta^{1/2}:[\fc/\bG_m^{[2]}]\ra[\fg/\bG_m^{[2]}\x\rho(\bG_m^{[2]})]\ra[\fg/\bG_m^{[2]}\x G].\]

Let $\mL^{1/2}$ be a square root of  $\mL$. For any $b:S\x
C\ra[\fc/\bG_m]$ in $B_{\mL}(S)$, it factors through a unique map
$b^{1/2}:S\x C\ra[\fc/\bG_m^{[2]}]$. Therefore, by composing
with $\eta^{1/2}$, we get a lift of $b$:
\[\eta^{1/2}(b):S\x C\stackrel{b^{1/2}}\longrightarrow[\fc/\bG_m^{[2]}]\stackrel{\eta^{1/2}}
\longrightarrow[\fg/\bG_m^{[2]}\x G]\ra[\fg/\bG_m\x G].\]
The assignment $b\ra\eta^{1/2}(b)$ defines a section
\[\eta_{\mL^{1/2}}:B_{\mL}\ra\on{Higgs}_{\mL}\]
of Hitchin map $h_{\mL}$.

We fix a
square root $\kappa=\omega^{1/2}$ (called a theta characteristic) of
$\omega$ and write
$\kappa=\eta_{\kappa}:B\ra\on{Higgs}$.

}

%%%%%%%%%%%%%%%
\quash{
\subsection{The universal centralizer group schemes}\label{univ cent}
Consider the group scheme $I$ over $\fg$ consisting of pair
$$I=\{(g,x)\in G\x\fg\mid \on{Ad}_g(x)=x\}.$$
We define $J=kos^{*}I$, This is called the universal centralizer group scheme of
$\fg$, which is a smooth commutative group scheme over $\fc$.
The following proposition is proved in \cite{DG, N1}
\begin{proposition}\label{J}
There is a canonical isomorphism of group schemes
$\chi^{*}J|_{\fg^{reg}}\is I|_{\fg^{reg}}$, which extends to a
morphism of group schemes $a:\chi^{*}J\ra I\subset G\x\fg$.
\end{proposition}

All the above constructions can be twisted. Namely, there are
$\bG_m$-actions on $I$, $J$. Moreover, the
$\bG_m$-action on $I$ can be extended to a $G\x\bG_m$-action
given by $(h,t)\cdot(x,g)=(tx,hgh^{-1})$. The natural morphisms $J
\ra\fc$ and $I\ra \fg$ are $\bG_m$-equivariant, therefore we can
twist everything by the $\bG_m$-torsor $\mL^\x$ and get
$J_{\mL}\ra\fc_{\mL}$, $I_{\mL}\ra\fg_{\mL}$ where
$J_{\mL}=J\x^{\bG_m}\mL^\x$ and
$I_{\mL}=I\x^{\bG_m}\mL^\x$. The group
scheme $I_{\mL}$ over $\fg_{\mL}$ is equivariant  under the
$G$-action, hence it descends to a group scheme $[I_{\mL}]$ over
$[\fg_{\mL}/G]$. For simplicity, $J=J_{\omega}$ and $I=I_{\omega}$ if no
confusion will arise.

%%%%%%%%%%%%%%%%%

\subsection{Symmetries of Hitchin fibration}\label{symmetry of Hitchin}
Let $b:S\ra B_{\mL}$ be $S$-point of $B_{\mL}$. It corresponds to a map
$b:C\x S\ra\fc_{\mL}$. Pulling back $J\ra\fc_{\mL}$
using $b$ we get a smooth groups scheme $J_b=b^{*}J$ over $C\x
S$.

Let $\sP_b$ be the Picard category of $J_b$-torsors over $C\x
S$. The assignment $b\ra\sP_b$ defines a Picard stack over $B$,
denoted by $\sP_\mL$. Let $b\in B_\mL(S)$.  Let $(E,\phi)\in\on{Higgs}_{\mL,b}$
and let $h_{E,\phi}:C\x S\ra [\fg_{\mL}/G]$ be the
corresponding map. Observe that the morphism $\chi^*J\to I$ in
Proposition \ref{J} induces $[\chi_{\mL}]^{*}J\ra [I]$ of group
schemes over $[\fg_{\mL}/G]$.
 Pulling back to $C\x S$ using $h_{E,\phi}$, we get a map

\begin{equation}\label{actEphi}
a_{E,\phi}:J_b\ra h_{E,\phi}^{*}[I]=\Aut(E,\phi)\subset \Aut(E).
\end{equation}
Therefore, using the  map $a_{E,\phi}$ we can twist
$(E,\phi)\in\on{Higgs}_{\mL,b}$ by a $J_b$-torsor. This defines an action
of $\sP_{\mL}$ on $\on{Higgs}_{\mL}$ over $B_{\mL}$.

Let $\on{Higgs}^{reg}_{\mL}$ be the open stack of $\on{Higgs}_{\mL}$ consists of
$(E,\phi):C\to [\fg_\mL/G]$ that factors through $C\to
[(\fg^{reg})_\mL/G]$. If $(E,\phi)\in\on{Higgs}^{reg}_{\mL}$, then
$a_{E,\phi}$ above is an isomorphism. The Kostant section $\epsilon_{\mL^{1/2}}:
B_{\mL}\to \on{Higgs}_{\mL}$ factors through $\epsilon_{\mL^{1/2}}: B_{\mL}\to \on{Higgs}^{reg}_{\mL}$.  The following
proposition can be extracted from \cite{DG,N1}:

\begin{proposition}\label{reg torsor}
The stack $\on{Higgs}^{reg}_{\mL}$ is a $\sP_{\mL}$-torsor, which is trivialized
by a choice of Kostant section $\epsilon_{\mL^{1/2}}$.
\end{proposition}

\subsection{The tautological section $\tau:\fc\to \Lie J$}\label{tau and c}
Recall that by
Proposition \ref{J}, there is a canonical isomorphism
$\chi^*J|_{\fg^{reg}}\simeq I|_{\fg^{reg}}$. The sheaf of Lie
algebras $\Lie (I|_{\fg^{reg}})\subset \fg^{reg}\x \fg$ admits a
tautological section given by $x\mapsto x\in I_x$ for $x\in
\fg^{reg}$. Clearly, this section descends to give a tautological
section $\tau: \fc\to \Lie J$. We have the following property of $\tau$.

\begin{lem}\label{taut:prolong}
Let $x\in \fg$, and $a_x: J_{\chi(x)}\to I_x\subset G$ be the homomorphism as in Proposition \ref{J} (1). Then $da_x(\tau(x))=x$.
\end{lem}
\begin{proof}Consider the universal situation $x=\id:\fg\to\fg$. Then we need to show that $da\circ\tau:\fg\to  \chi^*\Lie J\to \fg\x \fg$ is the diagonal map. But by definition, this is true when restricted to $\fg^{reg}\subset \fg$. Therefore it holds over $\fg$.
\end{proof}
\begin{remark}In particular, if we take $x=0$, this shows that $da_0:\Lie J_{\chi(0)}\to \frakg$ is not injective. I.e., ,the map $a:\chi^*J\to I$ is either injective nor surjective over a general $x\in \fg$.
\end{remark}

Observe that $\bG_m$ acts on $\fg^{reg}\x \fg$ via natural
homothethies on both factors, and therefore on $\chi^*\Lie
J|_{\fg^{reg}}\simeq \Lie (I|_{\fg^{reg}})\subset \fg^{reg}\x
\fg$. This $\bG_m$-action on $\chi^*\Lie J|_{\fg^{reg}}$ descends to
a $\bG_m$-action on $\Lie J$ and for any line bundle $\mL$ on $C$,
the $\mL^\x$-twist $(\Lie J)\x^{\bG_m}\mL^\x$ under this
$\bG_m$ action is $\Lie (J_\mL)\otimes \mL$, where $J_\mL$ is
introduced in \S \ref{univ cent}. In addition, $\tau$ is $\bG_m$-equivariant with respect to this $\bG_m$ action on
$\Lie J$ and the natural $\bG_m$ action on $\fc$. Therefore, if we define a vector bundle
$B_{J,\mL}$ over $B_{\mL}$, whose fiber over $b\in B_{\mL}$ is
$\Gamma(C,\Lie J_b\otimes \mL)$, then by
twisting $\tau$ by $\mL$, we obtain
\begin{equation}\label{taut sect II}
\tau_{\mL}: B_{\mL}\to B_{J,\mL}.
\end{equation}
which is a canonical section of the  projection $\pr:B_{J,\mL}\to
B_{\mL}$.
As before, we omit the subscript ${_\mL}$ if $\mL=\omega$ for brevity.
}
%%%%%%%%%%%%%%

\subsubsection{$p$-Hitchin map}
Let $\Loc_{G}$ be the stack of $G$-local systems on $C$, i.e., for
every scheme $S$ over $k$, $\Loc_{G}(S)$ is the groupoid of all
$G$-torsors $E$ on $C\x S$ together with a connection $\nabla:
T_{C\x S/S}\ra\widetilde{T}_E$, here $\widetilde{T}_E$ is the
Lie algebroid of infinitesimal symmetry of $E$.
Recall the notion of $p$-curvature of a $G$-local system following \cite{K,Bo}:
For any $(E,\nabla)\in\Loc_{G}$
the $p$-curvature  of $\nabla$ is defined as \[\Psi(\nabla): F^*T_{C'}\ra \ad(E), \quad v\ra\nabla(v)^p-\nabla(v^p).\]
We regard $\Psi(\nabla)$ as
an element
$\Psi(\nabla)\in\Gamma(C,\ad (E)\otimes\omega^p)$ and call such a pair an $F$-Higgs field.
The
assignment $(E,\nabla)\ra (E,\Psi(\nabla))$ defines a map
$\Psi_{G}:\Loc_{ G}\ra \on{Higgs}_{G,\omega^p}$.
Combining this map with $h_{\omega^p}$, we get a morphism from
$\Loc_{G}$ to $B_{\omega^p}$:
$$\tilde h_p:\Loc_{ G}\ra B_{\omega^p}.$$

Observe that the pullback along $F_C:C\to C^{(1)}$ induces a natural map
$F^p: B^{(1)}\ra B_{\omega^p}$, where the superscript denotes the
Frobenius twist.
By \cite[Theorem 3.1]{CZ1}, the $p$-curvature morphism $\tilde h_p:\Loc_{G}\ra B_{\omega^p}$
factors through a unique morphism $$h_p:\Loc_{ G}\ra B^{(1)}.$$ We called
this map the $p$-Hitchin map.

%%%%%%%%%%%%%%%%%%%%%%%

The construction of $p$-Hitchin map can be generalized to $\lambda$-connections.
Recall that for any $\lambda\in k$, a $\lambda$-connection on a $G$-torsor $E$
is an $\mO_C$-linear map
$\nabla_\lambda:T_C\ra\widetilde T_E$
such that the composition $\sigma\circ \nabla_\lambda:T_C\ra T_C$
is equal to $\lambda\cdot \id_{T_C}$ (where $\sigma: \widetilde
T_E\ra T_C$ is the natural projection). We denote by $\Loc_{G,\lambda}$ the stack of
$G$-bundles on $C$ with $\lambda$-connections. Then
\[\Loc_{G,1}=\Loc_G, \quad \Loc_{G,0}=\on{Higgs}_G.\]

Let $(E,\nabla_\lambda)\in\Loc_{G,\lambda}$. The $p$-curvature of
$\nabla_\lambda$ is defined as
$$\Psi(\nabla_\lambda): F^*T_{C'}\ra \ad(E), \quad v\ra\nabla_\lambda(v)^p-\lambda^{p-1}\nabla_\lambda(v^p).$$
The map
$\Loc_{G,\lambda}\to B_{\omega^p},\ (E,\nabla_\lambda)\mapsto
h_{\omega^p}(E,\Psi(\nabla_\lambda))$ factors through a unique map
$$h_{p,\lambda}:\Loc_{G,\lambda}\ra B^{(1)},$$
called the $p$-Hitchin map for $\lambda$-connections. It is clear
that $h_{p,1}=h_p$ and $h_{p,0}=F\circ h$, where $h:\on{Higgs}\to B$
is the usual Hitchin map and $F:B\to B^{(1)}$ is the relative Frobenius
of $B$. From this perspective, the $p$-Hitchin map can be regarded
as a deformation of the usual Hitchin map.

%%%%%%%%%%%%%%%%%%%%%%%%%%%%

\subsection{Opers with marking}\label{oper}
In this subsection we recall the definition of opers with \emph{marking}
following \cite{B}.
There is a canonical decreasing Lie algebra filtration
$\{\fg^{k}\}$ of $\fg$
$$\cdot\cdot\cdot\supset\fg^{-1}\supset\fg^{0}\supset\fg^{1}\supset\cdot\cdot\cdot$$
such that $\fg^0=\frakb$, $\fg^1=\frakn$ and for any $i>0$ (resp.\
$<0$)  weights of the action of $\frakt=\on{gr}^0(\fg)$ on $\on{gr}^i(\fg)$
are sums of $i$ simple positive (resp.\ $-i$ simple negative) roots. In
particular, we have $\on{gr}^{-1}(\fg)=\oplus\fg_{\alpha}$, where
$\alpha$ is a simple negative root and $\fg_{\alpha}$ is the
corresponding root space.

Let $E$ be a $B_G$-torsor on $C$ and $E_G$ (resp. $E_T$) be the induced $G$-torsor
(resp. $T$-torsor)
on $C$. In this subsection, we denote by $\frakb_{E}$ and $\fg_{E_G}=\fg_E$  the associated
adjoint bundles. % (rather than $\ad(E)$). 
Let $\widetilde T_{E}$ and $\widetilde T_{E_G}$ be
the Lie algebroids of infinitesimal symmetries of  $E$ and $E_G$.
There is a natural embedding $\widetilde T_{E}\ra \widetilde
T_{E_G}$ and we have a canonical isomorphism
$$\widetilde T_{E_G}/\widetilde T_{E}\is(\fg/\fb)_{E}=: E\x^B(\fg/\fb).$$
For any connection $\nabla$ on $E_G$, we denote by $\bar\nabla$ the
composition
$$\bar\nabla:T_C\stackrel{\nabla}{\to}\widetilde
T_{E_G}\to  \widetilde T_{E_G}/\widetilde T_{E}\is(\fg/\fb)_{E}.$$

\begin{definition}\label{def: oper}
We fix a square root $\kappa=\omega^{1/2}$ of the canonical bundle $\omega$.
A $G$-oper on $C$ with marking is a triple $(E,\nabla,\phi)$ where $E$ is a
$B_G$-torsor on $C$,
$\nabla$ is a connection on $E_G$, and $\phi:E_T\is\omega^{1/2}\x^{\bG_m,2\rho}T$ is an isomorphism of $T$-torsor (we call $\phi$ the \emph{marking}), such that
\begin{enumerate}
 \item The image of $\bar\nabla$
lands in  $(\fg^{-1}/\frakb)_{E}\subset(\fg/\fb)_{E}$.
\item The composition $$T_C\stackrel{\bar\nabla}{\to}(\fg^{-1}/\frakb)
_{E}\stackrel{pr_{\alpha}}{\to}(\fg_{\alpha})_{E}$$ is an
isomorphism for every simple negative root $\alpha$. Here
$$pr_{\alpha}:(\fg^{-1}/\frakb)
_{E}=\oplus(\fg_{\beta})_{E}\ra(\fg_{\alpha})_{E}$$ is the natural
projection.
\item The condition (2) implies $\nabla$ induces an isomorphism
\begin{equation}\label{iso}
E_T\x^T(\fg^1/\fg^2)\is\oplus_{i=1}^l(\fg_{\alpha_i})_E
\stackrel{\tilde\phi}\ra\omega^{\oplus l}\is(\omega^{1/2}\x^{\bG_m,2\check\rho}T)\x^T(\fg^1/\fg^2).
\end{equation}
We require the marking $\phi$ to be compatible with $\tilde\phi$.
\end{enumerate}
\end{definition}

We denote by $\on{Op}_G$ the scheme of $G$-opers with marking on $C$.
Notice that if we drop condition (3) and the data of~$\phi$ in the above definition, then we obtain the definition of $G$-opers in \cite{BD}.
As shown in \emph{loc.\ cit.}, a $G$-oper has $Z(G)$ as its automorphism group, the additional
condition (3) eliminate these automorphisms (cf. \cite[Proposition 2.1]{B}).

\begin{remark}
When $G$ is of adjoint type, there exits a unique marking $\phi$
compatible with $\tilde\phi$. Thus, in this case, the condition (3)
is automatic. In general, the conditions (1) and (2) do not imply existence of $\phi$, hence
 we are limiting our collection of opers compared to \cite{BD}.
\end{remark}

\begin{example}
Consider the case $G=GL_n$. Then an oper with marking can be
described in terms of vector bundles as follows: it consists of the data $(E,\{E_i\}_{i=1,...,n},\nabla,\phi)$
where $E$ is a rank $n$ vector bundle on $C$,
$E_1\subset E_2\subset\cdot\cdot\subset E_n=E$ is a complete flag,
$\nabla$ is a connection on $E$,
and $\phi:E_1\is\omega^{(n-1)/2}$ is an isomorphism, such that
\begin{enumerate}
\item $\nabla(E_i)\subset E_{i+1}\otimes\omega$.
\item For each $i$, the induced morphism
$\gr_i(E)\stackrel{\gr_i(\nabla)}\ra \gr_{i+1}(E)\otimes\omega$ is an isomorphsim.
\end{enumerate}
\end{example}

\subsubsection{Affine space structure on $\on{Op}_G$}\label{a Op-aff}
We follow closely the presentation in \cite[3.1.7-3.1.9]{BD}.
We first consider the case $G=\SL_2$. 
Let $(E,\nabla,\phi)\in\on{Op}_{\SL_2}$.
The isomorphism~\eqref{iso} takes the form 
$(\frak n)_E\is E\times^B\frak n\is E_T\times^T(\frak g^1/\frak g^2)\is\omega$.
Translating $\nabla$ by a section of $\omega^{2}\is
(\frak n)_E\otimes\omega$, we get a new oper with marking 
and a direct computation shows that the resulting $\Gamma(C,\omega^{2} )$-action makes $\on{Op}_{\SL_2}$ a $\Gamma(C,\omega^2)$-torsor.

Let $G$ be general reductive group. 
We pick a principal embedding $\iota:\frak{sl}_2\imbed \fg$
and a homomorphism
 $\tilde \iota:\SL_2\to G$ with  $d\tilde \iota= \iota$. 
%\todo[Well, usually $2\rho$ is a notation for a weight\dots\ so even if you agree
%to drop the $^\vee$ and denote the corresponding coweight this way, it will still be
%a map from $\Gm$, not $\SL_2$. Maybe write $\tilde\iota : \SL_2\to G$?]
Let us write $B_0=T_0N_0$ for the standard Borel subgroup of $\SL_2$
and $\frak b_0=\frak t_0+\frak n_0$ the Borel subalgebra of $\on{sl}_2$.
Without loss of generality we can assume $\tilde \iota(B_0)\subset B$. 
Let $\{e_0,h_0,f_0\}$ be the standard basis of the principal 
$\frak{sl}_2\subset\fg$ and let $V=\fg^{e_0}$. 
The grading on $\fg=\oplus_i\fg_i$ induces a grading 
$V=\oplus_iV_i$ on $V$ and we consider %XXX
the $\rho^+(\bG_m)$-action on $V$ given
by $t\cdot v_i=t^{i+1}v_i$ for $v\in V_i$.
We set $V_\omega=V\times^{\bG_m,\rho^+(\bG_m)}\omega$.
Then
the embedding 
$\frak n_0=\bC e_0\to V$ gives rise to an embedding
\begin{equation}\label{omega to V}
\omega^2\is (\frak n_0\times^{T_0}\omega^{1/2})\otimes\omega\to
(V\times^{T_0,\tilde \iota}\omega^{1/2})\otimes\omega\is V\times^{\bG_m,\rho^+(\bG_m)}\omega=V_\omega.
\end{equation}

Let $(E_0,\nabla_0,\phi_0)\in\on{Op}_{\SL_2}$.
The push-forward $\tilde \iota(E_0,\nabla_0,\phi_0)=(E,\nabla,\phi)$
along the principal homomorphism $\tilde \iota:\SL_2\to G$ is a $G$-oper with marking.
Moreover we have an embedding
\begin{equation}
V_\omega\is (V\times^{T_0,\tilde \iota}\omega^{1/2})\otimes\omega\is
(V\times^{B_0}E_0)\otimes\omega\to (\fb\times^BE)\otimes\omega=(\frak b)_E\otimes\omega.
\end{equation}
Thus translating $\nabla$ by a section of $v\in
\Gamma(C,V_\omega)\subset \Gamma(C,(\frak b)_E\otimes\omega)$  defines a new $G$-oper with marking denoted by 
$\nabla+v$.
Consider the following $\Gamma(C,V_\omega)$-torsor
\[\on{Op}_{\SL_2}\times^{\Gamma(C,\omega^2)}\Gamma(C,V_\omega),\]
where $\Gamma(C,\omega^2)$ acts on $\Gamma(C,V_\omega)$
via the map~\eqref{omega to V}.

\begin{lem}\label{affine}
The principal $\tilde \iota:\SL_2\to G$  gives rise to an isomorphism
\[\on{Op}_{\SL_2}\times^{\Gamma(C,\omega^2)}\Gamma(C,V_\omega)\is\on{Op}_G\ \ \ 
(\nabla_0,v)\to \nabla+v.\]
\end{lem}
\begin{proof}
It suffices to check the assertion locally.
So it is enough to show that 
for any $(E,\nabla,\phi)\in\on{Op}_G$, there exists a unique trivialization 
of $E$ such that the connection 
has the form
$\nabla=d+dx\otimes (f_0+v)$
where $v:C\to V$.
This follows from  Kostant's theorem, see
\cite[Section 3.4]{BDop} or \cite[Lemma 3.5]{B}.\footnote{In \emph{loc. cit.}
the authors assume $G$ is adjoint, but since Kostant's theorem holds for general reductive groups, 
 the same proof applies to the general case.}
\end{proof}

\subsubsection{Filtration on $\mO(\on{Op}_G)$}
One defines a $(G,\lambda)$-oper with marking as before by replacing connection
$\nabla$ by $\lambda$-connection $\nabla_\lambda$. We denote by
$\on{Op}_{G,\lambda}$ the scheme of $(G,\lambda)$-opers with marking.
Clearly we have  $\on{Op}_{G,1}=\on{Op}_G$.

All $(G,\lambda)$-opers with marking form a scheme equipped with a morphism
$q: \widetilde{\on{Op}_G}\ra\mathbb A^1$, such that the fiber of
$\widetilde{\on{Op}_G}$ over $\lambda\in\bbA^1(k)$ is
$\on{Op}_{G,\lambda}$.
Moreover, there a $\bG_m$-action on
$\widetilde{\on{Op}_G}$, given by $(E,\nabla)\mapsto (E,t\nabla)$ and the morphism $q$ is
$\bG_m$-equivariant.
In addition, if we let $\mathrm{Bun}_N^{\kappa}$ denote the moduli of $B_G$-bundles $E$ on $C$ (say $C$ projective) equipped with an isomorphism $\phi: E_T\is\omega^{1/2}\x^{\bG_m,2\rho}T$ as in Definition \ref{def: oper}, then there is a natural morphism $f:\widetilde{\on{Op}_G}\to \mathrm{Bun}_N^{\kappa}$. When $G=\SL_2$, there are canonical isomorphisms $ \mathrm{Bun}_N^{\kappa}\cong H^1(C, \omega) \cong \mathbb A^1$ under which $f$ is identified with $q$.

By the same discussions as in Lemma \ref{affine} and the discussions before that, there is an action of $\Gamma(C, \omega^2)$ on $\widetilde{\on{Op}_{\SL_2}}$, and the map $f$ (and $q$) is $\Gamma(C, \omega^2)$-equivariant. In addition, there is a canonical isomorphism
\[\widetilde{\on{Op}_{\SL_2}}\times^{\Gamma(C,\omega^2)}\Gamma(C,V_\omega)\is\widetilde{\on{Op}_G},\ \ \ 
(\nabla_\lambda,v)\to \iota_*(\nabla_\lambda)+v.\]

%The quotient   $\widetilde{\on{Op}_{\SL_2}}/\Gamma(C,\omega^2)\cong \mathrm{Bun}_N^{\kappa}$.

\begin{lem}
The map $q:\widetilde{\on{Op}_G}\ra\mathbb A^1$ is flat.
\end{lem}
\begin{proof}
We will actually prove that the morphism is smooth.
By the above isomorphism, it is enough to prove the statement for $G=\SL_2$.

Recall that a morphism $f:X\to Y$ of finite type smooth schemes over an algebraically closed field $k$ is smooth if and only if for every closed point $x$ of $X$, the induced map of tangent spaces $df: T_xX\to T_{f(x)}Y$ is surjective. 
Now we compute the tangent map in our situation.  Since the map is $\Gm$-equivariant and has smooth fibers
over $\lambda\ne 0$ by Lemma \ref{affine}, it is smooth over $\lambda\ne 0$. Thus
 it is enough to compute it  at  points of $\widetilde{\on{Op}_{\SL_2}}$ lying over $\lambda=0$. Let $(E_0,\nabla_0)$ be such a point (so $\nabla_0$ is a Higgs field). Then a standard deformation theory argument shows that the tangent space of $\widetilde{\on{Op}_{\SL_2}}$ at this point is $H^1(C, \omega)\oplus H^0(C,\omega^2)$ and the differential $dq$ is identified with the projection to the first factor $H^1(C,\omega)=k$. The lemma is proved.
 \end{proof}

We have the forgetful map $\on{Op}_{G,\lambda}\ra\on{LocSys}_{G,\lambda},\ (E,\nabla_\lambda,\phi)\ra (E_G,\nabla_\lambda)$ and at $\lambda=0$
\begin{equation}\label{Kostant map}
B\is\on{Op}_{G,0}\ra\Loc_{G,0}=\on{Hggs}_G
\end{equation}
is the Kostant section $\epsilon_\kappa$ induced by $\kappa=\omega^{1/2}$.

The $p$-Hitchin map for $\lambda$-connections gives
$$\tilde\pi_p:\widetilde{\on{Op}_G}\ra B^{(1)}\x\mathbb A^1,\ \ \
\ (E,\nabla_\lambda,\phi)\ra
(h_{p,\lambda}(E_G,\nabla_\lambda),\lambda).$$
The map $\tilde\pi_p$ is compatible with the natural morphisms to $\mathbb A^1$ from both sides (namely the morphism $q$ on the left and the second projection on the right), and is
$\bG_m$-equivariant where $\bG_m$ acts diagonally on $B^{(1)}\x\mathbb A^1$.
We denote by $\pi_{p,\lambda}:\Op_{G,\lambda}\ra B^{(1)}$ the base change of
$\tilde\pi_p$ to $\lambda\in\mathbb A^1(k)$. When
$\lambda=1$, we get a map
\begin{equation}\label{pi_p}
\pi_{p}:=\pi_{p,1}:\Op_G\ra\Loc_G\xrightarrow{h_p} B^{(1)},
\end{equation}
and  (\ref{Kostant map}) implies $\pi_{p,0}:B=\on{Op}_{G,0}\ra B^{(1)}$ is the relative Frobenius morphism $F:B\ra B^{(1)}$.

%Let us (temporarily) fix an $\SL_2$-oper $(E_0,\nabla_0,\phi_0)\in\on{Op}_{\SL_2}$, denote by $(E,\nabla,\phi)$ the corresponding $G$-oper induced from the principal $\SL_2$, and consider the morphism 
%\[  \Gamma(C,V_\omega) \x \mathbb A^1  \to \widetilde{\on{Op}_G},\quad  (v,\lambda) \mapsto  (E,\lambda\nabla+v,\phi) .  \]
%Arguing similarly to the proof of Lemma~\ref{affine}, we see this is an isomorphism.
%Also, this morphism is $\bG_m$-equivariant with respect to the action $(v,\la)\mapsto(t\cdot v,t\lambda)$ for the LHS and $(E,\na_\lambda,\phi) \mapsto  (E,t\na_\lambda,\phi)$ for the RHS, where $\cdot$ stands for the $\rho^+$-action from \ref{a Op-aff}.  The action on the LHS can be easily seen to be contracting, so the action on the RHS is too.  Thus,
%
%Since the $\bG_m$-action on $\widetilde{\on{Op}_G}$ is contracting,
%\todo[(To justify this, perhaps it is easiest to repeat the construction of Section~\ref{a Op-aff} for $\wtld{\Op_G}$.)]
%Using the well-known identification of filtered $k$-algebras with graded flat $k[t]$-algebras (cf. \cite[\S3.1.14]{BD}),
%the discussion above implies the following lemma.

We recall the well-known equivalence of categories between $k$-algebras equipped with an exhaustive filtration and graded flat $k[\lambda]$-algebras (with $\deg \lambda=1$). Namely, the equivalence sends a  $k$-algebra $A$ equipped with an increasing filtration $F_\bullet A$ such that $k\subset F_0A$, $\cup_i F_iA=A$, to the graded flat $k[\lambda]$-algebra $R= \oplus_i F_i A \lambda^i$. The quasi-inverse functor is given by sending $R=\oplus R_i$ to $A=R/(\lambda-1)R$ with $F_iA=\mathrm{Im}(R_i\mapsto A)$. Note that under this equivalence, $R/\lambda R$ can be identified with the associated graded of $A$.
Now the discussion above imply the following lemma.
\begin{lem}\label{gr}
Let $\pi_p^*:\mO({B^{(1)}})\to \mO({\Op_G})$ be the map of ring of functions
corresponding to $\pi_{p}:\on{Op}_G\ra B^{(1)}$. Then there are
filtrations on $\mO({\Op_G})$ and $\mO({B^{(1)}})$ such that
\begin{enumerate}
\item The associated graded $\gr(\mO({\Op_G}))\is\mO(B)$ and $\gr(\mO({B^{(1)}}))\is \mO({B^{(1)}})$.
\item $\pi_p^*$ is compatible with the filtrations.
\item The induced morphism
$$\gr(\pi_p^*): \mO({B^{(1)}})\ra\mO({B})$$
is the relative Frobenius map.
\end{enumerate}
\end{lem}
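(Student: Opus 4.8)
The plan is to read off all three assertions from the flat, $\bG_m$-equivariant family $\widetilde{\on{Op}_G}\to\bbA^1$ together with the map $\tilde\pi_p\colon\widetilde{\on{Op}_G}\to B^{(1)}\times\bbA^1$ constructed above, via the Rees dictionary. Recall from \cite[\S3.1.14]{BD} that a nonnegatively filtered $k$-algebra $A$ is the same datum as a graded, flat $k[\lambda]$-algebra $\tilde A$ (with $\lambda$ in degree $1$): one has $A=\tilde A/(\lambda-1)$ and $\gr A=\tilde A/\lambda$, and a homomorphism of such graded $k[\lambda]$-algebras is the same as a filtered homomorphism, whose reduction modulo $\lambda$ is the induced map on associated gradeds. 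I will apply this dictionary to the two sides of $\tilde\pi_p$ and to $\tilde\pi_p^*$ itself.

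For the source, the family $\widetilde{\on{Op}_G}\to\bbA^1$ is flat and the $\bG_m$-action $(E,\nabla)\mapsto(E,t\nabla)$, which has nonnegative weights, makes $R_{\widetilde{\on{Op}_G}}$ a graded flat $k[\lambda]$-algebra. The dictionary then equips $R_{\Op_G}=R_{\widetilde{\on{Op}_G}}/(\lambda-1)$ with a filtration whose associated graded is $R_{\widetilde{\on{Op}_G}}/\lambda$, the ring of functions on the fibre over $\lambda=0$. Since that fibre is $\on{Op}_{G,0}=B$, this yields $\gr(R_{\Op_G})\cong R_B$, the first half of (1). For the target, $B^{(1)}\times\bbA^1$ with the diagonal $\bG_m$-action and projection to $\bbA^1$ has coordinate ring $R_{B^{(1)}}\otimes k[\lambda]$, graded by total degree (the Hitchin grading on $R_{B^{(1)}}$ plus $\deg\lambda=1$); under the dictionary this corresponds to $R_{B^{(1)}}$ with its grading filtration, whose associated graded is canonically $R_{B^{(1)}}$ again, proving the second half of (1).

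It then remains to transport the map. As $\tilde\pi_p$ is $\bG_m$-equivariant and lies over $\bbA^1$, the pullback $\tilde\pi_p^*\colon R_{B^{(1)}}\otimes k[\lambda]\to R_{\widetilde{\on{Op}_G}}$ is a homomorphism of graded $k[\lambda]$-algebras. Reducing modulo $(\lambda-1)$ recovers $\pi_p^*$, so by functoriality of the dictionary $\pi_p^*$ is compatible with the two filtrations, which is (2). Reducing the same homomorphism modulo $\lambda$ computes $\gr(\pi_p^*)\colon R_{B^{(1)}}\to R_B$ and identifies it with the map of $\lambda=0$ fibres, i.e. with $\pi_{p,0}^*$. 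By \eqref{Kostant map} and the identification $\pi_{p,0}=F\colon B\to B^{(1)}$ established just above, this map is the relative Frobenius, which is (3).

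The argument is essentially bookkeeping with the Rees construction, so the only real inputs are the two facts packaged just before the lemma: that $\widetilde{\on{Op}_G}\to\bbA^1$ is flat with $\lambda=0$ fibre $B$, and that $\tilde\pi_p$ restricts over $\lambda=0$ to the relative Frobenius. The one point I would still check carefully is that both $\bG_m$-gradings (on $R_{\widetilde{\on{Op}_G}}$ and on $R_{B^{(1)}}$) are nonnegative, since this is exactly what guarantees that the resulting filtrations are exhaustive and that the filtered/graded dictionary of \cite[\S3.1.14]{BD} applies; everything else follows formally.
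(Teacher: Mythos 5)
Your argument is correct and is essentially identical to the paper's: there the lemma is deduced in a single sentence by applying the same filtered/graded Rees dictionary of \cite[\S 3.1.14]{BD} to the flat $\mathbb{G}_m$-equivariant family $\widetilde{\mathrm{Op}}_G\to\mathbb{A}^1$ and the equivariant map $\tilde\pi_p$, using that the fibre over $\lambda=0$ is $B$ and that $\pi_{p,0}$ is the relative Frobenius $F\colon B\to B^{(1)}$. Your write-up simply makes that one-line deduction explicit, including the sensible check that both $\mathbb{G}_m$-gradings are nonnegative so that the resulting filtrations are exhaustive and the dictionary applies.
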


We need one more property of the above filtration on $\on{Op}_G$.

\begin{lem}
The exhaustive filtration on $\on{Op}_G$ is separated. In fact, $F_{-1} \mO(\on{Op}_G)=0$.
\end{lem}
\begin{proof}
We claim that the degree $-1$ part of $\mO(\widetilde{\Op_G})$ is zero. First, we know that the $\mathbb G_m$-action on $\Op_{G,0}$ is just by dilation of the vector space. So $\gr \mO(\on{Op}_G)$ is concentrated in non-negative degrees. It follows that 
\[
\cdots \stackrel{\cdot \lambda}{\cong}  \mO(\widetilde{\Op_G})^{-2}\stackrel{\cdot \lambda}{\cong}  \mO(\widetilde{\Op_G})^{-1} \stackrel{\cdot \lambda}{\hookrightarrow}  \mO(\widetilde{\Op_G})^{0}  \stackrel{\cdot \lambda}{\hookrightarrow}  \mO(\widetilde{\Op_G})^{1}  \stackrel{\cdot \lambda}{\hookrightarrow}  \cdots.
\]
We let $\mO(\widetilde{\Op_G})^{\mathrm{deg}}\subset \mO(\widetilde{\Op_G})$  be the graded subspace with $\mO(\widetilde{\Op_G})^{\mathrm{deg},m}=\lambda^{m+n} \mO(\widetilde{\Op_G})^{-n}$ for $n\gg 0$. Then $\mO(\widetilde{\Op_G})^{\mathrm{deg}}$ is a graded ideal, defining a flat $\mathbb G_m$-invariant closed subscheme $\widetilde{\Op_G}^{\mathrm{nondeg}}\subset \widetilde{\Op_G}$ satisfying $\widetilde{\Op_G}^{\mathrm{nondeg}}|_0=\widetilde{\Op_G}|_0$. As $\widetilde{\Op_G}|_1=\Op_G$ is smooth irreducible and $\dim(\widetilde{\Op_G}|_1)=\dim(\widetilde{\Op_G}|_0)$, we must have $\widetilde{\Op_G}^{\mathrm{nondeg}}=\widetilde{\Op_G}$. It follows that $\mO(\widetilde{\Op_G})^{-1}=0$.
\end{proof}

\subsection{Proof of Theorem \ref{a main}}
Let $\pi_p:\Op_G\ra B^{(1)}$ be the map in (\ref{pi_p}).
We first show that $\pi_p$
is finite and surjective. Thus we need to show that
$\pi_p^*:\mO({B^{(1)}})\ra \mO({\Op_G})$ is injective and $\mO({\Op_G})$ is finitely
generated as an $\mO({B^{(1)}})$-module. Since both rings $\mO({\Op_G})$ and
$\mO({B^{(1)}})$ are filtered and $\pi_p^*$ is compatible with the
filtrations, it is enough to show that the associated graded map
$\gr(\pi_p^*):\gr(\mO({B^{(1)}}))\ra \gr(\mO({\Op_G}))$ is injective and $\gr(\mO({\Op_G}))$
is a finitely generated $\gr(\mO({B^{(1)}}))$-module. But this is clear since
by the lemma above $\gr(\pi_p^*)$ is the Frobenius map. Now $\pi_p$ is a
finite map between $\on{Op}_{G}$ and $B^{(1)}$, which are smooth of the
same dimension, and therefore it is flat. In addition, as the relative Frobenius map $B\to B^{(1)}$ is of degree $p^{\dim B}$, so is $\pi_p$.

\begin{remark}\label{rem_HW}
Lemma \ref{gr} shows that the map $\pi_p:\Op_G\to B^{(1)}$ is a deformation of the Frobenius
morphism $Fr:B\to B^{(1)}$. In the special case when $G=GL(1)$ it is not hard to see that
one can identify $\Op_{GL(1)}$ with $B=\Gamma(C,\omega)$ so that the morphism $\pi_p$ is identified
with $Fr-\mathfrak C$ where $\mathfrak C: \Gamma(C,\omega)\to \Gamma(C,\omega)^{(1)}$ is the map induced by Cartier
isomorphism; it is closely related to the Hasse-Witt matrix of $C$. In particular, $\pi_p$ is purely inseparable if and only if $C$ is supersingular. It would be interesting to obtain a similar explicit description of the map $\pi_p$ for nonabelian $G$.
\end{remark}

\begin{remark}\label{rem_EFK}
There is a  parallel between analytic
constructions involving a complex algebraic variety $X_{\mathbb C}$ and algebraic constructions
%involving Frobenius neighborhood of a point on
involving  an algebraic variety $X_k$
over a field $k$ of characteristic $p>0$. Thus the analytic exponential function is analogous 
to the Artin-Schreier polynomial $x^p-x$ and the Stone -- von Neumann Theorem is 
parallel to the Azumaya property of crystalline differential operators:
the former asserts that unique
 representation of a Heisenberg group on a Hilbert space is realized as $L^2({\mathbb R}^n)$,
 while the latter implies that unique irreducible representation of the ring of differential operators
on ${\mathbb A}^n$ with a fixed central character is realized as the space of functions on the
Frobenius neighborhood of a point $\O(FrN(x))$, $x\in {\mathbb A}^n$, see \cite[Remark 2.2.4(3)]{BMR}.

Another parallel appearing in the literature is between the nonabelian Hodge theory
for local systems on a complex curve and the (partially defined) Cartier transform \cite{OV}: both construction
produce a Higgs field starting from a local system on the curve.  Notice that
 the support of the Cartier transform (when defined) of a $D$-module coincides with the support of the $D$-module as a module over the $p$-center.

In \cite{EFK} one finds a conjectural description of the spectrum  $S_{\mathbb C}$ of the ring of global
twisted differential operators on $\Bun_G$ for a complex curve acting on the space
of $L^2$ sections  of $\Omega^{1/2}_{\Bun}$. The above observation connecting
$L^2({\mathbb R}^n)$ to $\O(FrN(x))$ suggests that $S_{\mathbb C}$ is analogous
to $S_k$, the spectrum of the action of the ring $\Gamma(\DBu)$ acting on the space
of sections of $\Gamma(\Omega_{\Bun}^{1/2},FrN(x))$, $x\in \Bun$.
Notice that this module has zero $p$-curvature, so Theorem \ref{a main}
implies that $S_k$ is a subset in $\pi_p^{-1}(0)$, the set of opers with zero $p$-curvature
(dormant opers in the terminology of \cite{Mo}).
The analogy described in the previous paragraph suggests that the corresponding characteristic
zero object should be the set of opers which under the nonabelian Hodge theory corresponds to a Higgs field $(E,\phi)$
with $\phi=0$. A local system corresponding to such a Higgs field under the nonabelian Hodge theory 
has {\em unitary} monodromy. On the other hand, \cite[Conjecture 1.11]{EFK} asserts that $S_{\mathbb C}$
is a subset in the set of opers with {\em real} monodromy. Thus the above heuristics agrees with the 
Conjecture of \cite{EFK} up to the change of the real form of the complex group $G_{\mathbb C}$.

\end{remark}
\end{appendix}

\ncmd\eml{{\em Email:} \tt}

\bigskip \parskip=\medskipamount

\footnotesize\parindent=0pt

 {\bf R.B.}: Department of Mathematics, Massachusetts Institute
of Technology, Cambridge MA 02139, USA\\
{\eml bezrukav@math.mit.edu}

 {\bf R.T.}: 
 %School of Mathematics, Institute for Advanced Study,
%1 Einstein Dr.,
%Princeton, NJ 08540 USA \\ 
   Center for Advanced Studies,
   Skolkovo Institute of Science and Technology, Moscow, Russia\\
{\eml travkin@alum.mit.edu}

{\bf T.-H.C.}: School of Mathematics, University of Minnesota, Minneapolis, Vincent Hall, MN, 55455\\
{\eml chenth@umn.edu}

{\bf X.Z.}: Department of Mathematics, California Institute of Technology,
        1200 E. California Blvd., Pasadena, CA 91125, USA\\
        {\eml xzhu@caltech.edu}

\end{document}